\documentclass[fullpage]{amsart}
\usepackage{pictex}
\usepackage{amsmath}
\usepackage{amssymb}
\usepackage{amsopn}
\usepackage{epsfig}
\usepackage{amsfonts}
\usepackage{latexsym}
\usepackage{todonotes}
\usepackage{hyperref}
\usepackage[utf8]{inputenc}
\usepackage[margin=3.0cm]{geometry}

\newtheorem{theorem}{Theorem}[section]
\newtheorem{lemma}[theorem]{Lemma}
\newtheorem{proposition}[theorem]{Proposition}

\newtheorem{remark}{Remark}[section]
\newtheorem{example}{Example}
\newtheorem{algorithm}{Algorithm}

\newcommand{\R}{\mathbb R} \newcommand{\Z}{\mathbb Z}
\newcommand{\N}{\mathbb{N}} \newcommand{\Q}{\mathbb{Q}}

\title{A ``strange'' continued fraction associated with the Romik map}
\author{Yufei Chen}
\address{Jiangsu University, School of Mathematical Sciences, Xuefu 301, Zhenjiang, 212013, P.R.CHINA} 
\email{yufechen@163.com}
\author{Karma Dajani}
\address{Department of Mathematics, Utrecht University, P.O. Box 80010, 3508 TA Utrecht, the Netherlands}
\email{k.dajani@uu.nl}
\author{Yanyan Hu}
\address{School of Mathematics and Physics, University of Science and Technology Beijing, Beijing 100083, China} 
\email{yanyanhu1993@gmail.com}
\author{Cor Kraaikamp}
\address{Delft University of Technology, EWI (DIAM), Mekelweg 4, 2628 CD Delft, the Netherlands} 
\email{c.kraaikamp@tudelft.nl}

\begin{document}

\begin{abstract}
In~2008, Dan Romik studied in this journal \emph{Primitive Pythagorean Triples}, or PPTs. In order to do so, he introduced a modified slow (subtractive) Euclidean algorithm, and showed that the underlying dynamical system of this Euclidean algorithm (the ``Romik system''), is ergodic and has a $\sigma$-finite, infinite measure, of which is explicitly given.  

In this paper, the Romik system is further studied. Various basic properties are determined, such as the expansion of rational numbers and quadratic irrationals. Also (a version of) the planar natural extension of the Romik system is obtained, and the $\sigma$-finite, invariant measure is explicitly given, and it is shown that it is ergodic. Furthermore, for Lebesgue almost every $x$ asymptotically half of the regular continued fraction (RCF) convergents of $x$ are among the Romik convergents. We also show that related to the Romik map a ``strange'' continued fraction can be given. ``Strange,'' as the set of possible partial quotients (i.e., \emph{digits}) for any $x\in [0,1]$ in this expansion is $\{ 0, \pm 2\}$. Various properties of this ``Romik expansion'' are given.
\end{abstract}

\maketitle

\section{Introduction}\label{sec:Introduction}
Let $x\in\R$ have as \emph{regular continued fraction} (RCF) expansion $x=[a_0;a_1,a_2,\dots]$, where $a_0\in\Z$, such that $x-a_0\in [0,1)$, and\footnote{Recall that the RCF-expansion of $x\in\R$ is finite if and only if $x\in\Q$. In that case $x$ has two RCF-expansions. In all other cases the RCF-expansion of $x\in\R$ is unique.}  $a_n\in\N$, for $n\geq 1$. Furthermore, by finite truncation, we get the sequence of \emph{regular continued fraction convergents} $(p_n/q_n)$ of $x$, given by
\begin{equation}\label{RCFconvergents}
\frac{P_n}{Q_n} = a_0 + \frac{1}{a_1+ \frac{\displaystyle 1}{\displaystyle a_2+\ddots +\frac{1}{a_n}}},
\end{equation}
where $P_n,Q_n\in\Z$, $Q_n>0$, and $\text{gcd}\{ P_n,Q_n\}=1$, for $n\geq 1$. We denote~(\ref{RCFconvergents}) by $P_n/Q_n=[a_0;a_1,\dots,a_n]$. For more information of the RCF and its metrical and arithmetical properties, see~\cite{[DK1], [IK], [Kh], [RS]}.\smallskip\

The following result by Legendre from 1798 (\cite{[L],[BJ]}) gives an important motivation why continued fractions should be studied. Let $A,B\in\Z$ with $B>0$, and $\text{gcd}\{ A,B\}=1$ be such, that 
\begin{equation}\label{legendre}
B^2\left| x-\frac{A}{B}\right| < \frac{1}{2},
\end{equation}
then the rational number $A/B$ is an RCF-convergent of $x$. So there exists an $n\in\N$, such that $A=P_n$ and $B=Q_n$. Legendre's result states that \emph{if} we want to approximate an irrational number $x$ well by a rational number $A/B$ (such that~\eqref{legendre} holds), this rational number is an RCF-convergent of $x$. In fact, in~\cite{[BJ]} it is shown, that if
$$
B^2\left| x-\frac{A}{B}\right| < 1,
$$
we have that $A/B$ is either an RCF-convergent, or a so-called first or last mediant convergent. For $1\leq k\leq a_{n+1}-1$, mediant convergents (or: mediants) are of the form
$$
\frac{kP_n+P_{n-1}}{kQ_n+Q_{n-1}}.
$$
By definition, there is no mediant convergent if $a_{n+1}=1$. The sequence of RCF-convergents and mediants are ``generated'' by the \emph{Farey tent map} $F$. This map and its natural extension were introduced and studied by Shunji Ito in 1989 (\cite{[I]}); see~\cite{[D], [P]} and also~\cite{[DKS], [S]}. The Farey tent map $F:[0,1]\to [0,1]$, which Ito called the \emph{mediant convergent transformation}, is defined by
$$
F(x) = \begin{cases}
\displaystyle{\frac{x}{1-x}}, & \text{for $0\leq x\leq \frac{1}{2}$}\medskip\ ;\\
\displaystyle{\frac{1-x}{x}}, & \text{for $\tfrac{1}{2}\leq x\leq 1$}.
\end{cases}
$$
Setting $X=[0,1]$, on p.~564 of~\cite{[I]}, Ito mentions that the following theorem was proved in~\cite{[D], [P]}.
\begin{theorem}\label{them:DandP}
The mediant convergent transformation $(X,F)$ has $\sigma$-finite invariant measure $\mu$
$$
d\mu = \frac{dx}{x},
$$
and the dynamical system $(X, F, \mu )$ is ergodic.
\end{theorem}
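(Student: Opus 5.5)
Invariance is checked through the Perron--Frobenius (transfer) operator. For a Borel set $E\subset[0,1]$ we need $\mu(F^{-1}E)=\mu(E)$, and it suffices to verify this on intervals $E=[0,t]$. The two inverse branches of $F$ are
$$
y\mapsto \frac{y}{1+y}\in[0,\tfrac12], \qquad y\mapsto \frac{1}{1+y}\in[\tfrac12,1].
$$
Hence $F^{-1}[0,t]=[0,\tfrac{t}{1+t}]\cup[\tfrac{1}{1+t},1]$. Both pieces have infinite measure near $0$, so we work with tails $E=[t,1]$, $0<t\le 1$, which have finite measure. Then
$$
F^{-1}[t,1]=\Big[\tfrac{t}{1+t},\tfrac12\Big]\cup\Big[\tfrac12,\tfrac{1}{1+t}\Big],
$$
whose $\mu$-measure is
$$
\log\tfrac{1+t}{2t}+\log\tfrac{2}{1+t}=\log\tfrac1t=\mu([t,1]).
$$
The sets $[t,1]$ form a $\pi$-system generating the Borel $\sigma$-algebra on $(0,1]$, and $[0,1]$ is a countable union of sets of finite measure. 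Standard uniqueness of $\sigma$-finite measures agreeing on a generating $\pi$-system then gives invariance. Equivalently, one checks the transfer equation
$$
\frac1y=\frac{1}{(1+y)^2}\cdot\frac{1+y}{y}+\frac{1}{(1+y)^2}(1+y).
$$

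For ergodicity I would induce on $A=[\tfrac12,1]$, where $\mu$ is finite. The key step is to identify the first-return (jump) transformation on $A$, or more conveniently the induced map on $[\tfrac12,1]$ or on its complement, with a conjugate of the Gauss map $G(x)=1/x-\lfloor 1/x\rfloor$. Concretely, iterating the left branch $x\mapsto x/(1-x)$ from a point $x\in(0,\tfrac12]$ gives $1/F^k(x)=1/x-k$. So the number of steps spent in $[0,\tfrac12)$ is governed by $\lfloor 1/x\rfloor$, and after leaving, the right branch applies $1/x-1$. Thus the map induced on $[\tfrac12,1]$, after the change of variable $x\mapsto 1/x-1$, becomes the Gauss map. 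The Gauss map is ergodic with respect to the Gauss measure; this is classical, obtained via bounded distortion/R\'enyi's condition. Ergodicity of the induced system $(A,F_A,\mu|_A)$ then follows, since $\mu|_A$ is the pushforward of Gauss measure up to a constant factor. This is easily checked: $dx/x$ on $[\tfrac12,1]$ under $u=1/x-1$ becomes $du/(1+u)$.

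Finally I lift ergodicity from $A$ to $X$. The set $A$ is a sweep-out set: every $x\in(0,1]$ that is not eventually $0$ reaches $A$, since $1/F^k(x)=1/x-k$ eventually drops below $2$. This accounts for all $x$ except the countable set of points eventually mapped to $0$. A standard lemma then applies: if $F$ is conservative, $A$ sweeps out, and $F_A$ is ergodic, then $F$ is ergodic. Indeed, given an invariant set $B$, the set $B\cap A$ is $F_A$-invariant, hence null or co-null in $A$, and sweeping out transfers this to $B$ and its complement.

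The main obstacle is rigorously identifying the induced map with the Gauss map, including tracking the branch at each step. A second point requiring care is conservativity of the infinite measure. This follows from Kac-type recurrence, since the induced system is finite-measure and $A$ sweeps out. Alternatively, I would cite R\'enyi-type Markov map results with an indifferent fixed point at $0$.
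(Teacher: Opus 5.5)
Your proof is correct, but the paper has no proof to compare it with. Theorem~\ref{them:DandP} is only quoted (Ito attributes it to earlier work), so your argument is self-contained rather than a variant of the paper's.

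The closest analogue in the paper is Proposition~\ref{propertiesnaturalextensionRomikmap}. There, invariance is checked by a Jacobian computation on a planar natural extension, the one-dimensional density is recovered as a marginal, and ergodicity is imported from Romik's theorem. The same planar route works for $F$: the map $(x,y)\mapsto (F(x),\tfrac{y}{1+y})$ for $x\le\tfrac12$, $(x,y)\mapsto (F(x),\tfrac{1}{1+y})$ for $x\ge\tfrac12$, preserves $(x+y-xy)^{-2}\,dx\,dy$, whose marginal is $dx/x$. That route gives invariance and a natural extension, but not ergodicity.

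Your route stays one-dimensional. Testing on the finite-measure tails $[t,1]$ is the right choice, since sets touching $0$ only give $\infty=\infty$. Inducing on $A=[\tfrac12,1]$ is the induced-map counterpart of the jump relation \eqref{jumpFtoR}. Your conjugacy $\phi(x)=1/x-1$ is just $F|_A$. For $u=\phi(x)\in(0,\tfrac12)\setminus\Q$, the orbit spends $\lfloor 1/u\rfloor-1$ steps on the left branch and lands at $1/(1+G(u))$. Hence $\phi\circ F_A=G\circ\phi$ off a countable set, so the step you flag as the main obstacle is a two-line check.

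You also run the inducing argument in the opposite direction to Remark~\ref{rem:OOCF}, where ergodicity of the induced system is deduced from that of the full one. This is what lets you derive ergodicity of $F$ from a classical fact about $G$ rather than assume it.

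Two small points. First, in $F^{-1}[0,t]$ only the piece $[0,\tfrac{t}{1+t}]$ has infinite measure; $[\tfrac{1}{1+t},1]$ has measure $\log(1+t)$. This is harmless.

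Second, you do not need conservativity. Every point of $A\setminus\{1\}$ returns to $A$, and every point of $(0,1]$ enters $A$, so $F_A$ is defined off a countable set. If $B$ is $F$-invariant with $\mu(A\cap B)=0$, then $B\subset\{0\}\cup\bigcup_{n\ge0}F^{-n}(A\cap B)$, which is null by invariance of $\mu$. The same argument applies to the complement of $B$.
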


Note that there is a close relation between the Farey tent map and the Gauss (or: {\it regular continued fraction}) map $G:[0,1)\to [0,1)$, defined by
\begin{equation}\label{GaussMap}
G(x) = \frac{1}{x} - \left\lfloor \frac{1}{x}\right\rfloor ,\quad \text{for $x\in (0,1)$, and $G(0)=0$}.
\end{equation}
The relation between $F$ and $G$ is given by (see also~(1,12) in~\cite{[I]})
\begin{equation}\label{jumpFtoR}
G(x) = F^k(x),\quad \text{if $x\in \Big[ \tfrac{1}{k+1},\tfrac{1}{k}\Big)$, for $k\in\N$}.
\end{equation}
So $G$ is an acceleration of $F$, also known as a {\it jump transformation}; see also~\cite{[Sch]}. As a consequence, the regular continued fraction convergents of any $x\in\R$ form a subsequence of the Farey-convergents of $x$. Originally, for the Farey tent map $F$ there was no continued fraction expansion related to it. However, in~\cite{[DK2]} it was shown that the so-called Lehner map $L: [1,2)\to [1,2)$, defined for $x\in [1,2)$ by
$$
L(x) = \begin{cases}
\displaystyle{\frac{1}{2-x}}, & \text{if $1\leq x<\tfrac{3}{2}$};\medskip\ \\
\displaystyle{\frac{1}{x-1}}, & \text{if $\tfrac{3}{2}\leq x <2$}
\end{cases}
$$
yields all the regular and mediant convergents of $x$, and also that using the map $L$ repeatedly we find a continued fraction expansion of $x$ of the form
$$
x = b_0+\frac{e_1}{b_1 +\displaystyle{\frac{e_2}{b_2+\ddots }}} = [b_0;e_1/b_1,e_2/b_2,\dots],
$$
which was originally found by Joe Lehner in~\cite{[Leh]}. Here, for $i\geq 0$,
$$
(b_i,e_{i+1}) = \begin{cases}
(1,1), & \text{if $L^i (x)\in \big[ \tfrac{3}{2},2\big)$}; \\
(2,-1), & \text{if $L^i (x)\in \big[1,\tfrac{3}{2}\big)$}.    
\end{cases}
$$
In general, for $c_n\in\{\pm 1\}$, $d_0\in\Z$ and $d_n\in\N$, for $n\in\N$, the continued fraction expansion
$$
d_0+\frac{c_1}{d_1 +\displaystyle{\frac{c_2}{d_2+\ddots }}}
$$
will be denoted\footnote{With the obvious restriction on $n$ when the continued fraction expansion is finite.} by $[d_0;c_1/d_1,c_2/d_2,\dots]$. When all $c_n$ are equal to $+1$ (which is the case of the RCF), we simply delete them.\smallskip\

It is shown in~\cite{[DKS]} that the Lehner map $L$ is  conjugate to the Farey tent map $F$ through the translation $S:x\mapsto x+1$; it is easy to see that for $x\in [1,2]$
$$
L(x) = S\circ F\circ S^{-1}(x).
$$
Defining $\epsilon: [0,1]\to \{ 0, 1\}$ by
$$
\epsilon (x) = \begin{cases}
0, & \text{if $x\leq \tfrac{1}{2}$}; \\
1, & \text{if $x> \tfrac{1}{2}$},
\end{cases}
$$
and setting for $x\in [0,1]$ and $n\geq 0$, $x_n=L^n(x)$, $\epsilon_{n+1}=\epsilon_{n+1}(x)= \epsilon (x_n)$, and if $[b_0;e_1/b_1,e_2/b_2,\dots]$ is the Lehner expansion of $x+1$, then the \emph{Farey continued fraction expansion} of $x$ is defined as
$$
x = [b_0-1;e_1/b_1,e_2/b_2,\dots],
$$
where
$$
(b_{i+1},e_i) = (2-\epsilon_{i+1}, 2\epsilon_{i+1}-1).
$$
See also Section~1.4 in~\cite{[S]} and Section 4 in~\cite{[DKS]}. Since Ito found in~\cite{[I]} the invariant measure and ergodicity for the Farey tent map $F$, using the conjugacy $S:x\mapsto x+1$, the invariant measure and ergodicity of the Lehner map $L$ follow immediately; see also~\cite{[DK1]}.

\section{The Romik map}\label{sec:RomikMap}
In order to study Pythagorean triples, Dan Romik introduced in~\cite{[R]} a variant of the Farey tent map, which Dong Han Kim, Seul Bee Lee and Lingmin Liao called in~\cite{[KLL]} the \emph{Romik map} $R:[0,1]\to [0,1]$, defined by:
\begin{equation}\label{RomikMapR}
R(x) = \begin{cases}
\displaystyle{\frac{x}{1-2x}}, & \text{for $0\leq x\leq \frac{1}{3}$}\medskip\ ;\\
\displaystyle{\frac{1}{x}-2}, & \text{for $\tfrac{1}{3}\leq x\leq\tfrac{1}{2}$}\medskip\ ;\\
\displaystyle{2-\frac{1}{x}}, & \text{for $\tfrac{1}{2}\leq x\leq 1$},
\end{cases}
\end{equation}
see Figure \ref{fig:Romikmap}.
\begin{figure}[h]
$$
\beginpicture
    \setcoordinatesystem units <0.6cm,0.6cm>
    \setplotarea x from -1 to 11, y from 0 to 11
    \put {\footnotesize $0$} at -0.2 -0.2
    \put {\footnotesize $1$} at 10 -0.25
    \put {\footnotesize $1$} at -0.2 10
    \put {\footnotesize $\tfrac{1}{3}$} at 3.3333 -0.35
    \put {\footnotesize $\tfrac{1}{2}$} at 5 -0.35

\setquadratic
 \plot 
 0 0
 0.25 0.26316
 0.3 0.319148
 0.35 0.376344
 0.4 0.43478
 0.5 0.555555
 0.6 0.68181818
 0.7 0.81395348
 0.9 1.09756097
 1  1.25
 1.1 1.410256
 1.2 1.578947
 1.5 2.142857
 2 3.333333
 2.5 5
 2.75 6.111111
 2.9 6.9047619
 3 7.5
 3.1 8.15789473
 3.15 8.5135135 
 3.2 8.88888 
 3.25 9.28571428
 3.3 9.70588235 
 3.31 9.7928994
 3.32 9.88095238
 3.33 9.97005988
 3.33333 10.0001
 /

\setquadratic
 \plot
 3.33333 10.0001
 3.4 9.4117647
 3.5 8.57142857
 3.6 7.77777
 3.7 7.02702702
 3.8 6.31578947
 3.9 5.64102564
 4 5
 4.1 4.3902439
 4.2 3.8095238
 4.3 3.25581395
 4.4 2.72727272
 4.5 2.22222
 4.6 1.73913043
 4.7 1.27659574
 4.8 0.833333
 4.9 0.40816326
 4.95 0.202020202
 5 0
 /

\setquadratic
 \plot
 5 0
 5.1 0.39215686
 5.2 0.76923076
 5.3 1.13207547
 5.4 1.48148148
 5.5 1.818181818
 5.6 2.14285714
 5.7 2.45614035
 5.8 2.75862069
 5.9 3.05084745
 6 3.3333333
 6.1 3.60655737
 6.2 3.87096774
 6.3 4.12698412
 6.4 4.375
 6.5 4.61538461
 6.6 4.84848484848
 6.7 5.07462686
 6.8 5.29411764
 6.9 5.50724637
 7 5.71428571
 7.1 5.91549295
 7.2 6.1111111
 7.3 6.30136986
 7.4 6.486486486
 7.5 6.666666
 7.6 6.84210526
 7.7 7.01298701
 7.8 7.17948717
 7.9 7.34177215
 8 7.5
 8.1 7.65432098
 8.2 7.80487804
 8.3 7.95180722
 8.4 8.09523809
 8.5 8.23529411
 8.6 8.37209302
 8.7 8.50574712
 8.8 8.63636363
 8.9 8.76404494
 9 8.8888888
 9.1 9.01098901
 9.2 9.13043477
 9.3 9.24731182
 9.4 9.36170212
 9.5 9.47368421
 9.6 9.5833333
 9.7 9.69072164
 9.8 9.79591836
 9.9 9.8989898
 10 10
/

\putrule from 0 10 to 10 10
\putrule from 10 0 to 10 10
\putrule from 0 0 to 10 0
\putrule from 0 0 to 0 10

\setdots
\putrule from 3.33333 0 to 3.33333 10
\putrule from 5 0 to 5 10
\endpicture
$$ 
\caption{The Romik map $R$}\label{fig:Romikmap}
\end{figure}

Romik showed that $R$ is ergodic, and has a $\sigma$-finite, infinite invariant measure $\mu$ absolutely continuous with respect to Lebesgue measure and defined by 
\begin{equation}\label{density}
\mu(A)=\int_A\frac{1}{\sqrt{2}}\cdot \frac{1}{x(1-x)}{\rm d}\lambda(x),
\end{equation}
where $\lambda$ is Lebesgue measure and $A$ is a Borel subset of $[0,1]$. Since $\mu$ is an infinite measure, the factor $\frac{1}{\sqrt 2}$ is superfluous, so we omit it from the rest of the paper.

\smallskip\

There is a close relation between the Farey tent map $F$ and the Romik map $R$
\begin{equation}\label{relationRomikFarey}
R(x) = \begin{cases}
F^2(x), & \text{if $x\in [0,\tfrac{1}{2})$}; \\
1-F(x), & \text{if $x\in [\tfrac{1}{2},1]$}.
\end{cases}
\end{equation}
We see that $R$ is either an accelerated version of $F$ (on $[0,\tfrac{1}{2}]$), or a ``flipped'' version of the RCF-map $G$ (which is also the Farey tent map $F$) on $[\tfrac{1}{2},1]$; see also~\cite{[DHKM]} for more information on ``flipped expansions.'' Clearly, $R$ is simply the RCF-map $G$ from~\eqref{GaussMap} (the ``Gauss'' map) on $(\tfrac{1}{3},\tfrac{1}{2}]$.

Recently, Dong Han Kim, Seul Bee Lee, and Lingmin Liao used in~\cite{[KLL]} the Romik map to study the \emph{odd-odd continued fraction} (OOCF) expansion, which was obtained as a jump transformation of the Romik map. In a similar way they also obtained Fritz Schweiger's \emph{even} continued (ECF, in~\cite{[KLL]} called EICF for \emph{even integer} continued fraction) fraction expansion from the Romik-system (also as a jump transformation). In~\cite{[KL]} it was shown how the ECF-expansion of any $x\in\R$ can be derived from the RCF-expansion of $x$ via \emph{insertions} and \emph{singularizations}. Insertions and singularizations will be introduced and used in the proof of Proposition~\ref{prop:relationRomikGauss} below. In~\cite{[KLL]}, Theorem~5.3, Kim \emph{et al}.\ obtain a similar result for the relation between the RCF-expansion and the OOCF-expansion of any $x\in\R$. Singularizations and insertions also are used in a conversion algorithm in Subsection~\ref{subsec:conversion} to convert the RCF-expansion of any $x\in[0,1)$ to its Romik-expansion.\medskip\

In comparison to the Farey tent map, much less is known about the Romik map. A natural question is: how do these systems relate to each other? Is the Farey-system ``larger'' than the Romik system, or the other way around? I.e., for an irrational $x$, is the sequence of convergents of $x$ `given' by the Romik map a subsequence of the Farey tent map? In view of the first line in~\eqref{relationRomikFarey} one would expect that. But how about the second line in~\eqref{relationRomikFarey}? Are there Romik-convergents which are not Farey-convergents? Or is the Farey-expansion after all \textbf{not} ``the mother of all (continued fraction) expansions''? The question whether all RCF-convergents are Romik-convergents is settled in Example~\ref{example2} and Theorem~\ref{thm:densityRCFconvergents} below (they are not!).\medskip\

There are many more immediate questions. For example, for the Farey tent map originally there was not a continued fraction map related to it, but then it turned out that an isomorphic version of the Lehner-map does the trick; see~\cite{[DKS], [S]}. Is there something similar for the Romik map? Can we associate a continued fraction expansion with it? In this paper, we will answer this question in the affirmative. But first we investigate what the Romik map ``does'' with rational and quadratic irrational numbers $x$.

\subsection{Elementary properties of the Romik map}\label{subsec:elementaryproperties}
Note that for $n\in\N$, $n\geq 3$ we have that:
$$
R(\tfrac{1}{n}) = \frac{\frac{1}{n}}{1-\frac{2}{n}} = \frac{1}{n-2},
$$
so we see that for these values of $n$ we have:
\begin{equation}\label{stepsRomikmap1}
R\left( [\tfrac{1}{n+1},\tfrac{1}{n})\right) = \Big[ \frac{1}{n-1},\frac{1}{n-2}\Big).
\end{equation}
Furthermore, for $n\in\N$, $n\geq 1$:
$$
R(\tfrac{n}{n+1}) = 2 - \frac{1}{\frac{n}{n+1}} = 2 - \frac{n+1}{n} = \frac{n-1}{n}, 
$$
so we see that for $n\geq 1$ we have that:
\begin{equation}\label{Rimageinterval}
R\left( [\tfrac{n}{n+1},\tfrac{n+1}{n+2})\right) = \Big[ \frac{n-1}{n},\frac{n}{n+1}\Big).
\end{equation}
It immediately follows from definition~\eqref{sec:RomikMap} of the Romik map $R$ and Figure~\ref{fig:Romikmap} that
\begin{equation}\label{stepsRomikmap2}
R\left( [0,\tfrac{1}{3}]\right) = [0,1] = R\left( [\tfrac{1}{3},\tfrac{1}{2}]\right) = R\left( [\tfrac{1}{2},1]\right);
\end{equation}
i.e., the Romik map $R$ is \emph{full} on the intervals $\left[ 0,\tfrac{1}{3}\right]$, $\left[ \tfrac{1}{3},\tfrac{1}{2}\right]$, and $\left[ \tfrac{1}{2},1\right]$.\medskip\

In the next proposition, we show that the orbit of a rational number $\tfrac{a}{b}\in (0,1)$ under $R$ is finite.
\begin{proposition}\label{prop:RationalsHaveFiniteOrbit}
Let $a,b\in\N$, $a<b$, then there exists an $n\in\N$, such that $R^n(\tfrac{a}{b}) \in\{ 0,1\}$. 
\end{proposition}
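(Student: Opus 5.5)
Write $x=\tfrac{a}{b}$ in lowest terms with $0<a<b$, and track how each branch of $R$ in~\eqref{RomikMapR} acts on the pair $(a,b)$. The image is always a rational $\tfrac{a'}{b'}$ in $[0,1]$ with nonnegative integer numerator and denominator. The aim is to show that a suitable positive integer measure of size strictly decreases at every step, unless the orbit has already reached $\{0,1\}$. A strictly decreasing sequence of positive integers is finite, so the orbit must hit $0$ or $1$ after finitely many steps. Once an iterate lies in $\{0,1\}$ we stop; note $R(0)=0$ and $R(1)=1$.

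The three cases are as follows.
\begin{itemize}
\item On $[0,\tfrac13]$ we get $R(\tfrac ab)=\tfrac{a}{b-2a}$, so $(a,b)\mapsto(a,b-2a)$ with $b-2a\ge a$.
\item On $[\tfrac13,\tfrac12]$ we get $R(\tfrac ab)=\tfrac{b-2a}{a}$, so $(a,b)\mapsto(b-2a,a)$.
\item On $[\tfrac12,1]$ we get $R(\tfrac ab)=\tfrac{2a-b}{a}$, so $(a,b)\mapsto(2a-b,a)$.
\end{itemize}
In each case the new pair is again coprime. This follows because each map is given by an integer matrix of determinant $\pm1$:
\[
\begin{pmatrix}1&0\\-2&1\end{pmatrix},\quad
\begin{pmatrix}-2&1\\1&0\end{pmatrix},\quad
\begin{pmatrix}2&-1\\1&0\end{pmatrix}.
\]
Hence the new denominator is exactly the reduced denominator of $R(x)$. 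Coprimality is not strictly needed, though; it suffices to work with the unreduced representation, since any such representation of $0$ or $1$ is detected by $a'=0$ or $a'=b'$.

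The measure I use is the denominator $b$, and I claim it strictly decreases while $0<a<b$.
\begin{itemize}
\item In the first case the new denominator is $b-2a<b$, since $a\ge1$.
\item In the second and third cases the new denominator is $a<b$.
\end{itemize}
The new numerator is nonnegative because $x$ lies in the corresponding subinterval. So after each step either the new numerator is $0$ or equals the new denominator (and we are done), or we again have $0<a'<b'$ with $b'<b$. Since $b\ge2$ initially and the denominators form a strictly decreasing sequence of positive integers, this can continue for at most $b$ steps. The process must therefore terminate at an iterate equal to $0$ or $1$.

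The only point needing care is the boundary points $\tfrac13$ and $\tfrac12$, where two branches overlap. At these points both formulas agree: $R(\tfrac13)=1$ and $R(\tfrac12)=0$. Here the orbit lands in $\{0,1\}$ directly, so the choice of branch is immaterial. There is no real obstacle beyond this bookkeeping; the key observation is simply that every branch of $R$ lowers the denominator of a rational in $(0,1)$.
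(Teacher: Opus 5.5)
Your proof is correct and takes the same route as the paper: each of the three branches sends $\tfrac{a}{b}$ to a fraction with denominator $b-2a$ or $a$, both strictly smaller than $b$, so the orbit reaches $\{0,1\}$ in finitely many steps. Your unimodular-matrix remark, which shows the image is already in lowest terms, is a tidier version of the paper's ``$q$ divides $b-2a$, or $q\le a$'' bookkeeping rather than a different argument.
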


\begin{proof}
For the proof we discern the following three cases.
\begin{itemize}
\item[($i$)] Let $\tfrac{a}{b}\in (0,\tfrac{1}{3}]$. Then $0<3a\leq b$, and therefore $b-2a\geq a>0$, and we have:
$$
0 < \frac{p}{q} = R(\tfrac{a}{b}) = \frac{\frac{a}{b}}{1-\frac{2a}{b}} = \frac{a}{b-2a}\leq \frac{a}{a}=1.
$$
So $q=b-2a$ (or $q$ is a divisor of $b-2a$ if $\tfrac{p}{q}=\frac{a}{b-2a}$ can be further simplified), and since $b-2a<b$ we see that $q<b$.\smallskip\

\item[($ii$)]  Let $\tfrac{a}{b}\in (\tfrac{1}{3},\tfrac{1}{2}]$. Then $2a\leq b$ and $b<3a$, and we have $0\leq b-2a<a$,
$$
0\leq \frac{p}{q} = R(\tfrac{a}{b}) = \frac{1}{\frac{a}{b}} - 2 = \frac{b}{a} - 2 = \frac{b-2a}{a} < \frac{a}{a}=1.
$$
So $\frac{p}{q} = \frac{b-2a}{a}$, and we see that $q\leq a < b$.\smallskip\

\item[($iii$)] Let $\tfrac{a}{b}\in (\tfrac{1}{2},1]$. Then $a\leq b$ and $b < 2a$, and we see that $0< 2a-b\leq a$, and therefore,
$$
\frac{p}{q} = R(\tfrac{a}{b}) = 2 - \frac{1}{\frac{a}{b}} = 2-\frac{b}{a}  = \frac{2a-b}{a}\leq \frac{a}{a}=1.
$$
So again we find that $q\leq a<b$.
\end{itemize}
So from these three cases we see that $R(\tfrac{a}{b})$ is a non-negative rational number $\tfrac{p}{q}\in [0,1]$, with $q<b$. Hence in finitely many steps, say $n$, we must have that $R^n(\tfrac{a}{b}) \in\{ 0,1\}$.
\end{proof}

\begin{remark}\label{rem:Romikexpansionrationalnumbers}{\rm 
As each of the tree branches of $R$ is \emph{full}, and $0$ and $1$ are fixed points of $R$, we see that rational numbers $p/q$ whose orbit eventually ``end'' in $0$ must be a pre-image under $R$ of $\tfrac{1}{2}$, while those ``ending'' in $1$ are the pre-images of $\tfrac{1}{3}$.
}\hfill $\triangle$
\end{remark}

\begin{proposition}\label{prop:relationRomikGauss}
If $x\in(0,1)$ has RCF-expansion $x=[0;a_1,a_2,a_3,a_4,\dots]$, then
\begin{equation}\label{RelationFG}
R(x) = \begin{cases}
[0;a_1-2,a_2,a_3,a_4,\dots], & \text{if $a_1>2$}; \\
[0;a_2,a_3,a_4\dots], & \text{if $a_1=2$};\\
[0;a_3+1,a_4,\dots], & \text{if $a_1=1$ and $a_2=1$}; \\
[0;1,a_2-1,a_3,a_4,\dots], & \text{if $a_1=1$ and $a_2\geq 2$}; 
\end{cases}
\end{equation}
\end{proposition}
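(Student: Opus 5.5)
Write $y=G(x)=[0;a_2,a_3,\dots]\in[0,1)$, so that $x=1/(a_1+y)$. The plan is to sort the four cases in~\eqref{RelationFG} according to which branch of~\eqref{RomikMapR} applies. We have $x\le \tfrac13$ iff $a_1\ge 3$ (up to the boundary rationals with two RCF-expansions), $x\in(\tfrac13,\tfrac12]$ iff $a_1=2$, and $x>\tfrac12$ iff $a_1=1$. On each branch we compute $R(x)$ directly in terms of $y$ and recognize the resulting RCF-expansion. Rational $x$, where the RCF-expansion is finite and ambiguous, will be handled by checking that the formulas hold for a suitable choice of expansion; the boundary points $\tfrac13,\tfrac12$ are where two branches of $R$ agree.

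\emph{Case $a_1\ge 3$.} Here $R(x)=x/(1-2x)=1/(1/x-2)=1/(a_1-2+y)=[0;a_1-2,a_2,\dots]$. Note that $a_1-2\ge1$, so this is a valid RCF-expansion. Alternatively, one can invoke the first line of~\eqref{relationRomikFarey}, $R=F^2$ on $[0,\tfrac12)$, together with the fact that $F(x)=x/(1-x)$ decreases the first partial quotient by one.

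\emph{Case $a_1=2$.} Here $R(x)=1/x-2=y=[0;a_2,a_3,\dots]$. This is just the observation that $R=G$ on $(\tfrac13,\tfrac12]$.

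\emph{Case $a_1=1$.} Here $x=1/(1+y)$, so $R(x)=2-(1+y)=1-y$, which is the second line of~\eqref{relationRomikFarey}. It remains to compute the RCF-expansion of $1-y$ for $y=[0;a_2,a_3,\dots]$; this is the classical flip identity.
\begin{itemize}
\item If $a_2\ge2$, then $y<\tfrac12$. Write $y=1/(a_2+z)$ with $z=[0;a_3,\dots]$. Then
$$
1-y=\frac{a_2-1+z}{a_2+z}=\cfrac{1}{1+\cfrac{1}{a_2-1+z}}=[0;1,a_2-1,a_3,\dots].
$$
\item If $a_2=1$, then $y=1/(1+z)$ and $1-y=z/(1+z)=1/(1+1/z)$. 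Since $1/z=a_3+[0;a_4,\dots]$, this gives $1-y=[0;a_3+1,a_4,\dots]$.
\end{itemize}

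The only delicate step is this flip computation, together with the bookkeeping for rational $x$ (for instance $z=0$ when the expansion stops at $a_2$) and the endpoints $x=\tfrac13,\tfrac12$. I expect these to be settled by choosing the appropriate one of the two finite RCF-expansions, or by a direct check. If desired, the flip identity can also be phrased as an insertion followed by a singularization, in the spirit of~\cite{[KL]}.
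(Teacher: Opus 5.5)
Your proof is correct and follows essentially the same route as the paper: the same case split by branch of $R$, with $R=F^2$ (equivalently your direct computation) for $a_1\ge 3$, $R=G$ for $a_1=2$, and $R(x)=1-G(x)$ for $a_1=1$. The only difference is that you verify the flip of $1-[0;a_2,a_3,\dots]$ by direct algebra, whereas the paper phrases the same two identities as an insertion of $-1/1$ (when $a_2\ge 2$) or a singularization of $a_2=1$ (when $a_2=1$). Note that it is one or the other, not an insertion followed by a singularization as your closing remark suggests.
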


\begin{proof} 
It is shown in~\cite{[DKSS]} that if $x\in(0,1)$ has RCF-expansion $x=[0;a_1,a_2,a_3\dots]$, then
\begin{equation*}
F(x) = \begin{cases}
[0;a_1-1,a_2,a_3,\dots], & \text{if $a_1>1$}; \\
[0;a_2,a_3,\dots], & \text{if $a_1=1$},
\end{cases}
\end{equation*}
and from this one see that $(G^n(x))_{n\geq 0}$ forms a subsequence of $(F^n(x))_{n\geq 0}$, which also immediately follows from~\eqref{jumpFtoR}. 

From ~\eqref{relationRomikFarey}, it immediately follows  that
$$
R(x) = F^2(x) = [0;a_1-2,a_2,a_3,\dots],\quad  \text{if $a_1>2$, so if $x\in (0,\tfrac{1}{3}]$},
$$
and from definition~\eqref{GaussMap} of the Gauss map $G$ we immediately find that
$$
R(x) = G(x) = [0;a_2,a_3,\dots], \quad \text{if $a_1=2$, so if $x\in (\tfrac{1}{3},\tfrac{1}{2}]$}.
$$

For $x\in (\tfrac{1}{2},1)$ we have two cases, depending on whether $x\geq \tfrac{2}{3}$ (i.e., $a_2\geq 2$), or whether $x\in (\tfrac{1}{2},\tfrac{2}{3})$ (i.e., $a_2=1$). In the first case, we need an \emph{insertion}, while in the second case we need an \emph{singularization}; see~\cite{[DHKM]} where both insertions and singularations are used to get \emph{flipped} continued fraction expansions. In~\cite{[KL]} both singularizations and insertions were used to give on ``on-line'' algorithm to convert the RCF-expansion of any $x\in [0,1)$ into the ECF-expansion of $x$; see also Remark 2 in~\cite{[KL]} for full details.\smallskip\

\emph{Insertions} are based on the following observation (here $a,b\in\N$, $b\geq 2$, $\xi\in [0,1)$),
\begin{equation}\label{insertion}
a + \frac{1}{b + \xi} = a + 1 + \frac{{\color{red} -1}}{{\color{red} 1} + \displaystyle \frac{1}{b - 1 + \xi}}.
\end{equation}
(we have inserted ``{\color{red} $-1/1$}'' before $b$), while \emph{singularizations} are based on the following (here $a,b\in\N$, $b\geq 2$, $\xi\in [0,1)$),
\begin{equation}\label{singularization}
a + \frac{{\color{red} 1}}{{\color{red} 1}+ \displaystyle \frac{1}{b + \xi}} = a + 1 + \frac{-1}{b + 1 + \xi}
\end{equation}
(we have singularized {\color{red} ``$1/1$''}).\smallskip\

Now let $x\in (\tfrac{1}{2},1)$, then $x=[0;1,a_2,a_3,a_4,\dots]$, and therefore $G(x)=\tfrac{1}{x}-1 = [0;a_2,a_3,a_4,\dots]$. But then we have,
\begin{equation}\label{FromRomikToRCF}
R(x) = 1 - \left( \frac{1}{x}-1\right) = 1 - [0;a_2,a_3,a_4,\dots] = 1 - \frac{1}{a_2 + \displaystyle \frac{1}{a_3+\ddots}}.
\end{equation}
If $a_2\geq 2$, we insert in equation~\eqref{FromRomikToRCF} ``$-1/1$'' before $a_2$ in $[0;a_2,a_3,\dots]$, to find
$$
R(x) = 1 - \left( 1 + \frac{-1}{1+ \displaystyle\frac{1}{a_2-1+\displaystyle \frac{1}{a_3+\ddots}}}\right) = [0; 1, a_2-1,a_3,\dots].
$$
Next, if $a_2=1$, we singularize in~\eqref{FromRomikToRCF} the partial quotient $a_2=1$ to arrive at
$$
R(x) = 1 - \left( 1 + \frac{-1}{a_3+1+\displaystyle \frac{1}{a_4+\ddots}} \right) = [0; a_3+1,a_4,\dots].
$$
\end{proof}

\subsection{Quadratic irrational numbers and periodic expansions}\label{sec:quadraticirrational}
It is a classic result that if $x$ is a real \emph{quadratic irrational} number, i.e., a number of the form
$$
x = \frac{a+\sqrt{b}}{c},
$$
where $a,b,c\in\Z$, $b\geq 1$ and $b\neq \Box$ (i.e., $b$ is not a square), and $c\neq 0$, then the RCF-expansion of $x$ (and therefore also the orbit $(G^n(x-a_0))_{n\geq 0}$ of $x-a_0$) is \emph{eventually periodic}
\begin{equation}\label{periodicRCFexpansion}
x = [a_0;a_1,\dots,a_{\ell}, \overline{a_{\ell+1},\dots,a_{\ell+p}}],
\end{equation}
where $a_0\in\Z$ is such that $a_0=\lfloor x\rfloor$, where $a_1,\dots,a_{\ell}$ is the so-called \emph{pre-period}, and where the bar indicates the \emph{period} $a_{\ell+1},\dots,a_{\ell+p}$. Here $p\in\N$ is the \emph{period length}. Conversely, if the RCF-expansion of a real number $x$ is given by~\eqref{periodicRCFexpansion} (i.e., if the RCF-expansion of $x$ is eventually periodic), then $x$ is a quadratic irrational. For a proof of these results, see e.g.~\cite{[HW]}.

\begin{proposition}\label{prop:eventuallyperiodicorbitofR}
Let $x\in\R$ be a quadratic irrational number with RCF-expansion given by~\eqref{periodicRCFexpansion}. Then the orbit $(R^n(x-a_0))_{n\geq 0}$ of $x-a_0$ under the Romik map $R$ is eventually periodic. 
\end{proposition}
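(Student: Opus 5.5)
Write $y=x-a_0=[0;a_1,a_2,\dots]$, whose RCF digit sequence is eventually periodic with period $a_{\ell+1},\dots,a_{\ell+p}$. We use Proposition~\ref{prop:relationRomikGauss} to track how $R$ acts on the digit sequence. In every case of~\eqref{RelationFG}, $R(y)$ is obtained from the RCF-expansion of $y$ by changing only the first three digits: the tail $a_4,a_5,\dots$ is copied unchanged. Iterating, each $R^n(y)$ has an RCF-expansion of the form $[0;b_1^{(n)},b_2^{(n)},\, a_{k_n},a_{k_n+1},\dots]$, a ``head'' of at most two modified digits followed by a genuine tail of the original sequence. Here $k_n$ is nondecreasing, and the head is one of $(c)$, $(1,c)$ or $(c, a_{k})$ types.

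First we show that $k_n\to\infty$. The first digit decreases by $2$ (case $a_1>2$), a digit is removed (cases $a_1=2$ or $a_1=a_2=1$), or we pass from $[0;1,a_2,\dots]$ to $[0;1,a_2-1,\dots]$. Every chain of these moves consumes an original digit after finitely many steps, since each digit is a finite positive integer. So the tail index $k_n$ advances infinitely often. Moreover, the modified head digits are bounded by $\max(a_i)+1$, and this maximum is finite for an eventually periodic sequence.

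Next comes a pigeonhole argument. Consider the state at the moments $n$ when the tail index $k_n$ first exceeds a value, taken modulo the period: the state is (head digits, $k_n \bmod p$) with $k_n>\ell$. Heads take finitely many values, since they have bounded length and bounded entries. Hence there are only finitely many states. Two times $n<m$ then share a state, and since the tail from $a_{k_n}$ equals the tail from $a_{k_m}$ by periodicity, we get $R^n(y)=R^m(y)$. Therefore the orbit is eventually periodic.

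The main obstacle is making the head description precise. Specifically, we must verify that the set of possible ``heads'' is finite and that the rule in~\eqref{RelationFG} applied to the head together with the next tail digits closes up. Case $a_1=a_2=1$ produces $a_3+1$, and case $a_2\ge2$ produces $a_2-1$, so heads mix original and modified digits. I would formalize the state as the full RCF digit sequence of $R^n(y)$ and observe that it equals $(h_1,h_2)$ concatenated with a suffix of $(a_i)$, where $h_i\in\{1,\dots,\max a_i+1\}$. An alternative that avoids bookkeeping is Galois theory: every branch of $R$ is a Möbius map with integer coefficients of determinant $\pm1$, so $R^n(y)$ stays in the $\mathrm{GL}_2(\Z)$-orbit of $y$, and all its RCF tails are tails of $y$. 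However, the digit argument above is more direct given Proposition~\ref{prop:relationRomikGauss}.
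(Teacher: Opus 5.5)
Your proof is correct, and it is organized differently from the paper's. The paper first reduces (Remark~\ref{periodicity}) to $x$ purely periodic or of the form $[0;1,1,\overline{a_1,\dots,a_p}]$. It then works with a small checkpoint set $\mathcal{P}$, made up of the $p$ cyclic shifts $x_i$ and the $p$ points $\xi_j$, and applies pigeonhole to the first-return map of $R$ on $\mathcal{P}$. The paper leaves implicit why the orbit enters $\mathcal{P}$ and always returns to it, i.e.\ why the return times $M_i,N_j$ exist.

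You instead prove an invariant for the entire orbit: every $R^n(y)$ has digit sequence of shape $(a_k,a_{k+1},\dots)$, $(c,a_k,a_{k+1},\dots)$ or $(1,c,a_k,\dots)$ with $1\le c\le\max_i a_i+1$. The closure check you flagged as the main obstacle is a short case analysis against~\eqref{RelationFG}, and it goes through. It also justifies the paper's return step. From $(c,a_k,\dots)$ the orbit reaches either $(a_k,\dots)$ or $(1,1,a_{k+1},\dots)$, and from $(1,c,a_k,\dots)$ it reaches $(1,1,a_k,\dots)$. Your route therefore avoids both the preliminary reduction and the first-return map. The cost is a larger but explicit finite set, of at most $(\max_i a_i+1)^2(\ell+p)$ candidate points instead of $2p$.

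Two small remarks. First, your step showing $k_n\to\infty$ is not needed. An eventually periodic sequence has at most $\ell+p$ distinct suffixes, so the whole orbit already lies in that finite set and pigeonhole applies directly. Second, the $\mathrm{GL}_2(\mathbb{Z})$ alternative would not suffice on its own. Equivalence only says that $R^n(y)$ and $y$ share some tail, with no bound uniform in $n$ on the length of the differing head, and that uniform bound is exactly what your head invariant provides.
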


\begin{remark}\label{periodicity}{\rm Without loss of generality, we may assume in Proposition~\ref{prop:eventuallyperiodicorbitofR} that $a_0=0$; i.e., that $x\in [0,1)$. It follows from Proposition~\ref{prop:relationRomikGauss} that $R(x)$ (and therefore also $R^n(x)$ for each $n\in\N$) has an RCF-expansion that is eventually periodic, and therefore $R^n(x)$ is a quadratic irrational number for all $n\in\N$ (this also follows immediately from the definition of the Romik map $R$). It also follows from repeated application of Proposition~\ref{prop:relationRomikGauss} that there exist an $N\in\N$, $N\geq \ell$, and a $k\geq 1$, such that the partial quotients in the pre-period have ``disappeared.'' However, this does \textbf{not} imply that $R^N(x)$ is of the form
$$
R^N(x) = [0; \overline{a_{\ell+k}, a_{\ell+k+1}a_{\ell+k+2},\dots, a_{\ell+k+p-1}}],
$$
as the following example shows. Let $x=[0;1,\bar{2}]=\tfrac{1}{2}\sqrt{2}=0.707106\cdots$. Then $R(x)=[0;a_1=1, a_2-1=1, a_3=2, a_4=2,\dots]=[0;1,1,\bar{2}]=2-\sqrt{2}=0.585786\cdots$, $R^2(x)=[0;a_3+1=3,a_4=2,\dots]=[0;3,\bar{2}]=\frac{2-\sqrt{2}}{2}=0.29289\cdots$, and $R^3(x)=[0;a_3+1-2=1,a_4=2,a_5=2,\dots]=[0;1,\bar{2}]=x$, and everything repeats itself periodically.}\hfill $\triangle$


\end{remark}

\begin{proof}
In view of Remark~\ref{periodicity} we may assume that $x\in [0,1)$, and that $x$ is either purely periodic, i.e., 
$$
x = x_0 = [0;\overline{a_1,\dots,a_p}],
$$
or that $x$ is of the form
$$
x = \xi_1 = [0;1,1,\overline{a_1,\dots,a_p}],
$$
where in both cases we assume that the \emph{period length} $p\geq 1$ is minimal; cf.~\eqref{periodicRCFexpansion}. Now set for $i=1,2,\dots,p$: $x_i = G^i(x_0)$, so 
$$
x_1=G(x_0)=[0;\overline{a_2,\dots,a_p,a_1}],\,\, \dots ,\,\, x_{p-1}=G(x_{p-2})=[0;\overline{a_p,a_1,\dots,a_{p-1}}], 
$$
and $x_p=x_0$, and
$$
\xi_i=[0;1,1,\overline{a_i,\dots,a_p,a_1,\dots,a_{i-1}}],\,\,\text{for $i=2,\dots,p$}.
$$
Set $\mathcal{P}=\{ x_0,\dots,x_{p-1},\xi_1,\dots,\xi_p\}$, and for $0\leq  i\leq p-1$, let  $M_i\in\N$ be the minimal positive integer for which $R^{M_i}(x_i)\in\mathcal{P}$. Also, for $1\leq j\leq p$, let $N_j\geq 1$ be the minimal positive integer for which $R^{N_j}(\xi_j)\in\mathcal{P}$. Now define the map $P:\mathcal{P}\to\mathcal{P}$ by
$$
P(x_i)=R^{M_i}(x_i),\quad \text{for $i=0,1,\dots,p-1$},
$$
and
$$
P(\xi_j)=R^{N_j}(\xi_j),\quad \text{for $j=1,2,\dots,p$}.
$$
Then the sequence $(P^n(x_0))_{n\geq 0}$ is a sequence in $\mathcal{P}$ and a subsequence of the orbit $(R^n(x))_{n\geq 0}$. Since the set $\mathcal{P}$ is finite, we must have that there exist integers $0\leq r<s$, such that $P^r(x)=P^s(x)$, which immediately implies that the orbit of $x$ under $R$ is eventually periodic.
\end{proof}

\begin{example}\label{ex:periodicexpansion}{\rm 
As an example, let $x=g/3.938=0.156941084\cdots = [0;\overline{6,2,1,2,4,1,1}]$, where $g=\frac{\sqrt{5}-1}{2}=0.618033988\cdots$ is\footnote{There are two \emph{golden means}: $g$ and $G=g+1=1/g$.} the \emph{golden mean}. Using the well-know recursion relations for the numerator $P_n$ and denominator $Q_n$ of the RCF-convergents of $x$ we find the first RCF-convergents of $x$ in Table~\ref{table1} below.
\begin{table}
$$
\begin{array}{|c|cc|c|c|c|c|c|c|c|}
\hline
n= & -1 & 0 & 1 & 2 & 3 & 4 & 5 & 6 & 7\\
\hline
a_n=  &  & 0 & a_1=6 & 2 & 1 & 2 & 4 & 1 & 1 \\
\hline
P_n= & 1 & 0 & 1 & 2 & 3 & 8 & 35 & 43 & 78\\
Q_n= & 0 & 1 & 6 & 13 & 19 & 51 & 223 & 274& 497\\
\hline
\tfrac{P_n}{Q_n}= & \infty & 0 & \tfrac{1}{6} & \tfrac{2}{13} & \tfrac{3}{19} & \tfrac{8}{51} & \tfrac{35}{223} & {\phantom{X}}\tfrac{43}{274}^{\phantom{X}}_{\phantom{Y}} & \tfrac{78}{497}\\
\hline
\end{array}
$$
\caption{The first RCF-convergents of $x=g/3.938$}\label{table1}
\end{table}
To make an add for the quality of approximation of the RCF-convergents to $x$, note that
$$
Q_7^2\left| x-\tfrac{P_7}{Q_7}\right| = 497^2\left| x - \tfrac{78}{497}\right| = 0.139784885\cdots .
$$

Now using~\eqref{RelationFG} from Proposition~\ref{prop:relationRomikGauss}, we easily find the $R$-orbit of $x$, where the points in the orbit are expressed by their RCF-expansion; see Table~\ref{table2}. 
\begin{table}[h]
$$
\begin{array}{llllll}
R(x) &=& [0;4,\overline{2,1,2,4,1,1,6}], & R^2(x) &=&[0;2,\overline{2,1,2,4,1,1,6}],\\
R^3(x) &=& [0;\overline{2,1,2,4,1,1,6}], & R^4(x) &=& [0;\overline{1,2,4,1,1,6,2}], \\
R^5(x) &=& [0;1,1,\overline{4,1,1,6,2,1,2}],& R^6(x) &=& [0;5,\overline{1,1,6,2,1,2,4}],\\
R^7(x) &=& [0;3,\overline{1,1,6,2,1,2,4}], & R^8(x) &=& [0;1,\overline{1,1,6,2,1,2,4}], \\
R^9(x) &=& [0;2,\overline{6,2,1,2,4,1,1}], & R^{10}(x) &=& [0;\overline{6,2,1,2,4,1,1}]\,\, =\,\, x.\\
\end{array}
$$
\caption{The beginning of the $R$-orbit of $x=g/3.938$}\label{table2}
\end{table}

We see that the orbit $(R^n(x))_{n\geq 0}$ of $x$ under $R$ is purely periodic, with period length $10$. For our example $x=g/3.938$ one finds that
$$
R(x)=F^2(x),\, R^2(x)=F^4(x),\, R^3(x)=F^6(x) (=G(x)),\, R^4(x)=F^8(x) (=G^2(x))
$$
which follow from~\eqref{relationRomikFarey}, and
$$
R^8(x)=F^{14}(x),\,\, R^{10}(x) = F^{17}(x)(=G^7(x)=x).
$$
So in the sequence $(F^n(x))_{n\geq 0}$ the terms $R^5(x)$, $R^6(x)$, $R^7(x)$, and $R^9(x)$ are missing; cf.~\eqref{relationRomikFarey}. Clearly, the sequence $(R^n(x))_{n\geq 0}$ is in general \textbf{not} a subsequence of $(F^n(x))_{n\geq 0}$. Similarly, the terms $G^i(x)$,  for $i=3,4,5,6$, are \textbf{not} in the sequence $(R^n(x))_{n\geq 0}$ (recall that for all $x\in[0,1]\setminus\Q$, the sequence $(G^n(x))_{n\geq 0}$ is a subsequence of the sequence $(F^n(x))_{n\geq 0}$).}\hfill $\triangle$
\end{example}

\begin{remark}\label{periodicityECFandOOCF}{\rm
As Kim \emph{et al.}\ showed in~\cite{[KLL]} that both the ECF-expansion and the OOCF-expansion can be obtained from the Romik map as suitable \emph{jump transformations}, an immediate consequence of Proposition~\ref{prop:eventuallyperiodicorbitofR}, is that if $x$ is a quadratic irrational, the ECF-expansion and the OOCF-expansion of $x$ are eventually periodic. The converse, which holds for the RCF (``if the RCF-expansion of $x$ is eventually periodic, then $x$ is quadratic irrational''), does not hold here, as we saw that there are rationals whose OOCF-expansion is infinite. See Proposition~\ref{prop:RationalsHaveFiniteOrbit} and Remark~\ref{rem:Romikexpansionrationalnumbers}, and in particular, see Theorem~1.3 in~\cite{[KLL]}.}\hfill $\triangle$
\end{remark}

\subsection{The natural extension for the Romik map}\label{subsec:naturalextension}
In fact, Romik already gave in~(11) (\cite{[R]}, p.~6056) the inverse branches of $R$. For $t\in [0,1]$, the inverse  $S_{\ell}$ of the left branch of $R$,  the inverse $S_m$ of the middle branch of $R$, and the inverse $S_r$ of the right branch of $R$ are, respectively, given for $y\in [0,1]$ by
\begin{equation}\label{naturalextensionmapR}
\begin{array}{ccc}
S_{\ell}(y) & = & \frac{y}{1+2y} \in [0,\tfrac{1}{3}];\medskip\ \\
S_m(y) & = & \frac{1}{2+y} \in [\tfrac{1}{3}, \tfrac{1}{2}];\medskip\ \\
S_r(y) & = &\frac{1}{2-y}\in [\tfrac{1}{2}, 1].
\end{array}
\end{equation}
Therefore, from~\eqref{RomikMapR} and~\eqref{naturalextensionmapR} a ``natural candidate'' for the natural extension map $\mathcal{R}:[0,1]\times [0,1]\to [0,1]\times [0,1]$ is given by:
\begin{equation}\label{naturalextensionmapR2}
\mathcal{R}(x,y) = \begin{cases}
\left( \frac{x}{1-2x},\frac{y}{1+2y}\right), & \text{if $x\in [0,\tfrac{1}{3}]$}; \medskip\ \\
\left( \frac{1}{x}-2, \frac{1}{y+2}\right), & \text{if $x\in [\tfrac{1}{3},\tfrac{1}{2}]$}; \medskip\ \\
\left( 2-\frac{1}{x}, \frac{1}{2-y}\right), & \text{if $x\in [\tfrac{1}{2},1]$}.
\end{cases}
\end{equation}
See also Figure~\ref{figurenatextandimageunderR}, which gives a course ``picture'' of the dynamics of the map $\mathcal{R}$. In Remark~\ref{rem:OOCF} we give a more detailed image of the dynamics of $\mathcal{R}$.\medskip\

\begin{figure}[h]
$$
\beginpicture
    \setcoordinatesystem units <0.45cm,0.45cm>
    \setplotarea x from 0 to 26, y from 0 to 8
  \put {\tiny{$0$}} at -0.1 -0.4
  \put {\tiny{$0$}} at 14.9 -0.4
  \put {\tiny{$1$}} at -0.2 10
  \put {\tiny{$1$}} at 10.2 -0.2
  \put {\tiny{$1$}} at 25.2 -0.2
  \put {\tiny{$1$}} at 14.8 10
  \put {\tiny{$\tfrac{1}{3}$}} at 3.333333 -0.4
  \put {\tiny{$\tfrac{1}{3}$}} at 14.8 3.333333
  \put {\tiny{$\tfrac{1}{2}$}} at 5 -0.4
  \put {\tiny{$\tfrac{1}{2}$}} at 14.8 5
\setlinear \plot
0 0 10 0 
/
\setlinear \plot
0 0 0 10
/
\setlinear \plot
0 10 10 10
/
\setlinear \plot
10 0 10 10
/
\setlinear \plot
15 0 25 0
/
\setlinear \plot
15 0 15 10
/
\setlinear \plot
15 10 25 10
/
\setlinear \plot
25 0 25 10
/
\setlinear \plot
5 0 5 10
/
\setlinear \plot
3.333333 0 3.333333 10
/
\setlinear \plot
15 5 25 5
/
\setlinear \plot
15 3.333333 25 3.333333
/
\setshadegrid span <1.15pt>
\vshade 0 0 10
3.333333 0 10
/
\setshadegrid span <1.15pt>
\vshade 15 0 3.333333
25 0 3.333333
/
\setshadegrid span <2pt>
\vshade 3.33333 0 10
5 0 10
/
\setshadegrid span <2pt>
\vshade 15 3.333333 5
25 3.333333 5
/
\setshadegrid span <3.2pt>
\vshade 5 0 10
10 0 10
/
\setshadegrid span <3.2pt>
\vshade 15 5 10
25 5 10
/


\endpicture
$$ \caption[naturalextension]{Left: the space of the natural extension $[0,1]\times [0,1]$. Right: the image of $[0,1]\times [0,1]$ under $\mathcal{R}$.} \label{figurenatextandimageunderR}
\end{figure}

Let $x\in(0,1)$ has RCF-expansion $x=[0;a_1,a_2,\dots]$, then in Proposition~\ref{prop:relationRomikGauss} we expressed the RCF-expansion of $R(x)$ in terms of the RCF-expansion of $x$. Let $\pi_2$ be the projection of a 2-dimensional vector $(x,y)$ on its second coordinate (i.e., $\pi_2(x,y)=y$, for all $x,y\in\R$), then we define the second coordinate map $S$ of $\mathcal{R}$ for $y\in [0,1]$ as
$$
S(y) = \pi_2(\mathcal{R}(x,y)),\quad \text{for $(x,y)\in [0,1]^2$}.
$$
We have the following result, which shows that the second coordinate map $S$ of $\mathcal{R}$ truly behaves as the inverse of $R$.
\begin{proposition}\label{prop:seconcoordinaterelationRomikGauss}
If $x\in(0,1)$ has RCF-expansion $x=[0;a_1,a_2,\dots]$ and $y=[0;b_1,b_2,\dots]$, then
\begin{equation}\label{RelationFG2}
S(y) = \begin{cases}
[0;b_1+2,b_2,\dots], & \text{if $a_1>2$}; \\
[0;2,b_1,b_2,\dots], & \text{if $a_1=2$};\\
[0;1,b_2+1,b_3,\dots], & \text{if $a_1=1$, and $b_1=1$}; \\
[0;1,1,b_1-1,b_2,\dots], & \text{if $a_1=1$, and $b_1\geq 2$}.
\end{cases}
\end{equation}
\end{proposition}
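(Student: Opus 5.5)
We will read off each case of \eqref{RelationFG2} from the explicit formulas for the second coordinate in \eqref{naturalextensionmapR2}, and convert the result into an RCF-expansion in the spirit of Proposition~\ref{prop:relationRomikGauss}. The case distinction on $a_1$ is exactly the branch distinction in \eqref{naturalextensionmapR2}: $a_1>2$ means $x\in(0,\tfrac13)$, $a_1=2$ means $x\in(\tfrac13,\tfrac12)$, and $a_1=1$ means $x\in(\tfrac12,1)$. Endpoint ambiguities arise only for rational $x$, where the two branches agree up to the usual double RCF-expansion, so we will not dwell on them.

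\emph{Case $a_1>2$.} Here $S(y)=\frac{y}{1+2y}=\frac{1}{2+1/y}$. Since $1/y=b_1+[0;b_2,b_3,\dots]$, we get $S(y)=[0;b_1+2,b_2,\dots]$.

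\emph{Case $a_1=2$.} Here $S(y)=\frac{1}{2+y}=\frac{1}{2+[0;b_1,b_2,\dots]}=[0;2,b_1,b_2,\dots]$.

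\emph{Case $a_1=1$.} Here $S(y)=\frac{1}{2-y}$, and the task is to write $2-y$ as an RCF-expansion. We write $2-y=1+(1-y)$, so it suffices to find the RCF-expansion of $1-y$; this is the ``flip'' already handled in the proof of Proposition~\ref{prop:relationRomikGauss}. We proceed as follows.
\begin{itemize}
\item If $b_1\ge2$, we apply the insertion \eqref{insertion} to $y=[0;b_1,b_2,\dots]$, giving $y=1-\frac{1}{1+[0;b_1-1,b_2,\dots]}$. Hence $1-y=[0;1,b_1-1,b_2,\dots]$ and $2-y=[1;1,b_1-1,b_2,\dots]$, so $S(y)=[0;1,1,b_1-1,b_2,\dots]$.
\item If $b_1=1$, we apply the singularization \eqref{singularization} to $y=[0;1,b_2,b_3,\dots]$, giving $y=1-\frac{1}{b_2+1+[0;b_3,\dots]}$. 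Hence $1-y=[0;b_2+1,b_3,\dots]$ and $2-y=[1;b_2+1,b_3,\dots]$, so $S(y)=[0;1,b_2+1,b_3,\dots]$.
\end{itemize}

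The main obstacle is the last case: the signs in the insertion and singularization identities, and the degenerate situations $b_1=1$ with $y$ rational or $b_2$ absent. As a safeguard we will verify each sub-case directly. For the case $b_1=1$ this means checking that $1-y=1-\frac{1}{1+t}=\frac{t}{1+t}=\frac{1}{1+1/t}$ with $t=[0;b_2,\dots]$, and that $1/t=b_2+[0;b_3,\dots]$.

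Finally, we note the consistency with Proposition~\ref{prop:relationRomikGauss}: each case of \eqref{RelationFG2} is the mirror image of the corresponding case of \eqref{RelationFG}. This confirms that $S$ acts as the inverse of $R$ on the ``past'' digits.
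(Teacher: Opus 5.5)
Your proof is correct and follows essentially the paper's route: the cases $a_1>2$ and $a_1=2$ are read off directly from $\frac{y}{1+2y}=\frac{1}{2+1/y}$ and $\frac{1}{2+y}$, and the case $a_1=1$ is handled with the insertion and singularization identities. The only difference is cosmetic. You apply these identities to $y$ itself (with $a=0$) to get the RCF-expansion of $1-y$, then add $1$ and invert. The paper instead reads the same identities backwards on $2-y$, which is why it describes the case $b_1=1$ as undoing an insertion and the case $b_1\geq 2$ as undoing a singularization.
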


\begin{proof}
If $y=[0;b_1,b_2,\dots]$, then the first two cases are straightforward. E.g., if $a_1>2$,
$$
\frac{y}{1+2y} = \frac{1}{2+\frac{1}{y}} = \frac{1}{2+b_1+\displaystyle \frac{1}{b_2+\displaystyle \frac{1}{b_3+\ddots}}} = [0;2+b_1,b_2,b_3,\dots].
$$
The last two cases again involve (the inverse of) an insertion and (the inverse of) a singularization. If $a_1=1$ and $b_1=1$, then
$$
\frac{1}{2-y} = \frac{1}{2-\displaystyle\frac{1}{1+\displaystyle\frac{1}{b_2+\ddots}}}.
$$
We can recognize an insertion here, and we find (``undoing'' the insertion):
$$
\frac{1}{2-y} = \frac{1}{1+\displaystyle\frac{1}{b_2+1+\displaystyle\frac{1}{b_3+\ddots}}} = [0;1,b_2+1,b_3,\dots].
$$
If $a_1=1$ and $b_1>1$, then (``undoing'' a singularization):
\begin{eqnarray*}
\frac{1}{2-y} &=& \frac{1}{2-\displaystyle\frac{1}{b_1+\ddots}} = \frac{1}{1+\displaystyle\frac{1}{1+\displaystyle\frac{1}{b_1-1+\displaystyle\frac{1}{b_2+\ddots}}}}\\
&=& [0;1,1,b_1-1,b_2,\dots].
\end{eqnarray*}
\end{proof}

\begin{proposition}\label{propertiesnaturalextensionRomikmap}
Apart from a Lebesgue set of measure 0, the Romik map $\mathcal{R}: [0,1]^2\to [0,1]^2$ defined in~\eqref{naturalextensionmapR2}, is bijective. The absolutely continuous invariant measure $\bar{\mu}$ of $\mathcal{R}$ in $[0,1]^2$ 
is given by
\begin{equation}\label{densityf}
\bar{\mu}(D)=\int_D \frac{1}{(x+y-2xy)^2}\, {\rm d}(\lambda\times\lambda)
\end{equation}
where $D$ is a Borel set in $[0,1]^2$, and $\mathcal{R}$ is ergodic. The map $\mathcal{R}$ is the natural extension of $R$.
\end{proposition}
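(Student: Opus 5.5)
The plan is to verify the three claims in turn: bijectivity, invariance of $\bar\mu$, and the natural-extension property together with ergodicity.

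\emph{Bijectivity.} The first coordinate of~\eqref{naturalextensionmapR2} maps each of the strips $[0,\tfrac13]$, $[\tfrac13,\tfrac12]$, $[\tfrac12,1]$ bijectively onto $[0,1]$; see~\eqref{stepsRomikmap2}. The second coordinate is, by~\eqref{naturalextensionmapR}, the bijection $S_\ell$, $S_m$ or $S_r$ of $[0,1]$ onto $[0,\tfrac13]$, $[\tfrac13,\tfrac12]$, $[\tfrac12,1]$ respectively. So $\mathcal R$ maps the vertical rectangle $[0,\tfrac13]\times[0,1]$ onto the horizontal rectangle $[0,1]\times[0,\tfrac13]$, and similarly for the other two rectangles, exactly as in Figure~\ref{figurenatextandimageunderR}. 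The horizontal images tile $[0,1]^2$ and overlap only on the lines $y=\tfrac13$ and $y=\tfrac12$. Hence $\mathcal R$ is a bijection off a Lebesgue null set, with inverse $(x,y)\mapsto (S_i(x),R(y))$, where $i$ is determined by which horizontal strip contains $y$.

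\emph{Invariance.} Write $f(x,y)=(x+y-2xy)^{-2}$. Since $\mathcal R$ is piecewise a product of Möbius maps and is a.e.\ bijective, it suffices to check on each branch that
\[
f(\mathcal R(x,y))\,|J_{\mathcal R}(x,y)|=f(x,y).
\]
On the left branch $x'=x/(1-2x)$ and $y'=y/(1+2y)$. A direct computation gives
\[
x'+y'-2x'y'=\frac{x+y-2xy}{(1-2x)(1+2y)},
\]
while the Jacobian is $(1-2x)^{-2}(1+2y)^{-2}$, so the identity holds. The middle branch ($x'=1/x-2$, $y'=1/(2+y)$) and the right branch ($x'=2-1/x$, $y'=1/(2-y)$) are checked by the same routine algebra, each yielding an analogous factorisation of $x'+y'-2x'y'$. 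As a consistency check, integrating out $y$ gives
\[
\int_0^1\frac{dy}{(x+(1-2x)y)^2}=\frac{1}{1-2x}\Big(\frac1x-\frac1{1-x}\Big)=\frac{1}{x(1-x)},
\]
which is Romik's density~\eqref{density}. So $\pi_1$ pushes $\bar\mu$ to $\mu$ and is a factor map from $([0,1]^2,\mathcal R,\bar\mu)$ to $([0,1],R,\mu)$.

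\emph{Natural extension and ergodicity.} Ergodicity of $\mathcal R$ follows from Romik's ergodicity of $R$ once $\mathcal R$ is shown to be the natural extension, since the natural extension of an ergodic system is ergodic. It therefore remains to show that $\bigvee_{n\ge0}\mathcal R^{n}\pi_1^{-1}\mathcal B$ is the full Borel $\sigma$-algebra mod $\bar\mu$. For this it suffices that a.e.\ point is determined by its $x$ and by the digits (left/middle/right) of its backward orbit. Those past digits place $y$ in the set $S_{i_1}\circ\cdots\circ S_{i_n}([0,1])$, and I would show its length tends to $0$ for a.e.\ $y$.

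This is the main obstacle, because $S_\ell$ and $S_r$ have neutral fixed points at $0$ and $1$, so there is no uniform contraction. I would handle it with Proposition~\ref{prop:seconcoordinaterelationRomikGauss}: each application of $S$ either prepends RCF digits or modifies the leading digits of $y$. Consequently the sets $S_{i_1}\circ\cdots\circ S_{i_n}([0,1])$ are unions of at most a bounded number of RCF cylinders of $y$, whose depth increases every time a middle or right step occurs. For the right branch the depth increase needs care, via the insertion/singularization bookkeeping. A left-only backward run of length $k$ keeps $y$ in $[0,1/(2k)]$, which still shrinks. Combining this with the fact that a.e.\ backward orbit visits the middle/right regions infinitely often (by recurrence of the $\bar\mu$-measure-preserving invertible map $\mathcal R$ on sets of positive finite measure), the fibres shrink to points a.e. This gives the natural-extension property and hence ergodicity.
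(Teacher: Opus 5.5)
Your proofs of a.e.\ bijectivity, of the invariance of $\bar\mu$, and of the fact that $\pi_1$ pushes $\bar\mu$ to $\mu$ are correct and are essentially the paper's argument. The paper pulls the integral back along $\mathcal{R}^{-1}$ on each strip; your forward check $f\circ\mathcal{R}\cdot|J_{\mathcal R}|=f$ with $f=(x+y-2xy)^{-2}$ is the same computation. The middle and right branches give $x'+y'-2x'y'=(x+y-2xy)/(x(2+y))$ and $(x+y-2xy)/(x(2-y))$. Your observation that $\mathcal{R}^{-1}(x,y)=(S_i(x),R(y))$, i.e.\ that $\mathcal{R}^{-1}$ is $\mathcal{R}$ with the coordinates swapped, is a nice touch.

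For ergodicity you, like the paper, use Romik's ergodicity of $R$ together with the natural-extension property. The paper only says that property is ``shown in a standard way.'' Your reduction of it to the shrinking of the sets $S_{i_1}\circ\cdots\circ S_{i_n}([0,1])$ is correct and goes beyond the paper.

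The argument you sketch for that shrinking does not work as stated. First, these sets are not unions of boundedly many RCF cylinders: $S_\ell^k([0,1])=[0,\tfrac{1}{2k+1}]=\{b_1\geq 2k+1\}$. Second, a right step need not increase the RCF depth. By Proposition~\ref{prop:seconcoordinaterelationRomikGauss}, $S_r([0;1,b_2,\dots])=[0;1,b_2+1,\dots]$, and $S_r^k([0,1])=[\tfrac{k}{k+1},1]=\{b_1=1,\ b_2\geq k\}$ never gets deeper than two digits. Third, Poincar\'e recurrence is not automatic for an infinite invariant measure: the translation $x\mapsto x+1$ on $\R$ is invertible and measure preserving but not recurrent. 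You would need conservativity of $\mathcal{R}$, which you have not established, and obtaining it from ergodicity of $\mathcal{R}$ would be circular. Also, the neutral fixed points are not a real obstacle: they destroy exponential contraction, not contraction.

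A direct argument closes the step. Represent $S_\ell,S_m,S_r$ by $\left(\begin{smallmatrix}1&0\\ 2&1\end{smallmatrix}\right)$, $\left(\begin{smallmatrix}0&1\\ 1&2\end{smallmatrix}\right)$, $\left(\begin{smallmatrix}0&1\\ -1&2\end{smallmatrix}\right)$, and let $M=S_{i_1}\cdots S_{i_n}=\left(\begin{smallmatrix}\alpha&\beta\\ \gamma&\delta\end{smallmatrix}\right)$. Put $d_0=\delta$ and $d_1=\gamma+\delta$. Since $|\det M|=1$, the interval $M\cdot[0,1]$ has length $1/(d_0d_1)$. Right multiplication by the three matrices sends $(d_0,d_1)$ to $(d_0,d_1+2d_0)$, $(d_0+d_1,d_1+2d_0)$, $(d_0+d_1,d_1)$ respectively, starting from $(1,1)$. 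Hence $d_0,d_1\geq 1$ and $d_0+d_1\geq n+2$, so every rank-$n$ Romik cylinder has length at most $1/\max(d_0,d_1)\leq 2/(n+2)$.

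Thus the Romik digits of $y$ determine $y$ for every $y$, not just almost every. Moreover $\mathcal{R}^n(A\times[0,1])=[0,1]\times C$, where $A$ is a rank-$n$ cylinder and $C$ the cylinder with the reversed digits. So $\bigvee_{n\geq 0}\mathcal{R}^n\pi_1^{-1}\mathcal{B}$ contains all rectangles $B\times C$ and is the full Borel $\sigma$-algebra. No RCF bookkeeping or recurrence is needed.
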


\begin{proof}
The fact that the Romik map $\mathcal{R}: [0,1]^2\to [0,1]^2$ is bijective (apart from a Lebesgue set of measure 0) follows directly from~\eqref{stepsRomikmap1}, \eqref{stepsRomikmap2}, and~\eqref{naturalextensionmapR}. To see this, note that if $x\in [\tfrac{1}{n+1},\tfrac{1}{n}]$, for $n\geq 3$, we have~\eqref{stepsRomikmap1} and~\eqref{naturalextensionmapR}
$$
\begin{array}{ccc}
\mathcal{R}\left( [\tfrac{1}{n+1},\tfrac{1}{n}]\times [0,1]\right) &=& [\tfrac{1}{n-1},\tfrac{1}{n-2}]\times [0,\tfrac{1}{3}], \\
\mathcal{R}\left( [\tfrac{1}{3},\tfrac{1}{2}]\times [0,1]\right) &=& [0,1]\times [\tfrac{1}{3},\tfrac{1}{2}],
\end{array}
$$
and $\mathcal{R}\left( [\tfrac{1}{2},1]\times [0,1]\right) = [0,1]\times [\tfrac{1}{2},1]$.\medskip\

We use a standard argument to show that the invariant measure $\bar{\mu}$ given by~\eqref{densityf}. Since $\mathcal{R}$ is bijective a.s., it is enough to show that $\bar{\mu} (D) = \bar{\mu} (\mathcal{R}(D))$ for any $D\in\mathcal{B}([0,1]^2)$. We do this using the \emph{change of variable formula}, and show
$$
\iint_D \frac{1}{(x+y-2xy)^2}\, {\rm d}x\,{\rm d }y = \iint_{{\mathcal R}(D) }\frac{1}{(\xi+\eta-2\xi\eta)^2}\, {\rm d}\xi\,{\rm d }\eta ,
$$
where $(\xi ,\eta ) = \mathcal{R} (x,y)$. Due to definition~\eqref{naturalextensionmapR2} of $\mathcal{R}$ we need to calculate this in three cases.\medskip\

\textbf{Case 1}: let $x\in [0,\tfrac{1}{3})$  and $(\xi ,\eta ) = \mathcal{R} (x,y) = \left( \tfrac{x}{1-2x}, \tfrac{y}{1+2y}\right)$. So $\xi = \tfrac{x}{1-2x}$ and $\eta = \tfrac{y}{1+2y}$. Solving for $x$ and $y$ we get $x=\tfrac{\xi}{1+2\xi}$ and $y=\tfrac{\eta}{1-2\eta}$.\smallskip\

Now we calculate the Jacobian:
$$
J = \left|
\begin{array}{cc}
\frac{1}{(1+2\xi )^2} & 0 \\
0 & \frac{1}{(1-2\eta )^2}
\end{array}
\right| = \frac{1}{(1+2\xi )^2}\frac{1}{(1-2\eta )^2}.
$$
Thus,
\begin{eqnarray*}
{\bar{\mu}} (D) &=& \iint_D \frac{1}{(x+y-2xy)^2}\, {\rm d}x\,{\rm d }y\\
&=&  \iint_{\mathcal{R}(D)} \frac{1}{\left(  \tfrac{\xi}{1+2\xi}+\tfrac{\eta}{1-2\eta}-2\tfrac{\xi}{1+2\xi}\tfrac{\eta}{1-2\eta}\right)^2}\, \frac{1}{(1+2\xi )^2}\frac{1}{(1-2\eta )^2}\, {\rm d}\xi\,{\rm d }\eta \\
&=& \iint_{\mathcal{R}(D)} \frac{1}{(\xi + \eta -2\xi\eta)^2}\, {\rm d}\xi\,{\rm d }\eta \\
&=& {\bar{\mu}} (\mathcal{R}(D)).
\end{eqnarray*}

\textbf{Case 2}: let $x\in [\tfrac{1}{3},\tfrac{1}{2})$  and $(\xi ,\eta ) = \mathcal{R} (x,y) = \left(\tfrac{1}{x}-2, \tfrac{1}{y+2}\right)$, so $\xi = \tfrac{1}{x}-2$ and $\eta = \tfrac{1}{y+2}$, implying $x = \tfrac{1}{\xi +2}$ and $y=\tfrac{1-2\eta}{\eta}$. Now, the Jacobian $J$ satisfies:
$$
J = \left|
\begin{array}{cc}
- \frac{1}{(2+\xi )^2} & 0\\
0 & -\frac{1}{\eta^2}
\end{array}
\right| = \frac{1}{(2+\xi )^2\, \eta^2}.
$$
But then we have
\begin{eqnarray*}
{\bar{\mu}} (D) &=& \iint_D \frac{1}{(x+y-2xy)^2}\, {\rm d}x\,{\rm d }y\\
&=&  \iint_{\mathcal{R}(D)} \frac{1}{\left( \tfrac{1}{\xi +2}+\tfrac{1-2\eta}{\eta} - 2\tfrac{1}{\xi +2}\tfrac{1-2\eta}{\eta}\right)^2}\, \tfrac{1}{(\xi +2)^2\, \eta^2} {\rm d}\xi\,{\rm d }\eta\\
&=& \iint_{\mathcal{R}(D)} \frac{1}{(\xi + \eta -2\xi\eta)^2}\, {\rm d}\xi\,{\rm d }\eta \\
&=& {\bar{\mu}} (\mathcal{R}(D)).
\end{eqnarray*}

\textbf{Case 3}: let $x\in [\tfrac{1}{2},1]$  and $(\xi ,\eta ) = \mathcal{R} (x,y) = \left( 2-\tfrac{1}{x}, \tfrac{1}{2-y}\right)$. Now $x=\tfrac{1}{2-\xi}$ and $y=\tfrac{2\eta-1}{\eta}$, and we find Jacobian $J$ in this case:
$$
J = \left|
\begin{array}{cc}
\frac{1}{(2-\xi )^2} & 0\\
0 & \frac{1}{\eta^2}
\end{array}
\right| = \frac{1}{(2-\xi )^2\, \eta^2}.
$$
So,
\begin{eqnarray*}
\bar{\mu} (D) &=& \iint_D \frac{1}{(x+y-2xy)^2}\, {\rm d}x\,{\rm d }y\\
&=&  \iint_{\mathcal{R}(D)} \frac{1}{\left( \tfrac{1}{2-\xi} + \tfrac{2\eta-1}{\eta} - \tfrac{2(2\eta-1)}{(2-\xi)\eta}\right)^2}\, \tfrac{1}{(2-\xi)^2\,\eta^2}{\rm d}\xi\,{\rm d }\eta\\
&=& \iint_{\mathcal{R}(D)} \frac{1}{(\xi + \eta -2\xi\eta)^2}\, {\rm d}\xi\,{\rm d }\eta \\
&=& \bar{\mu} (\mathcal{R}(D)).
\end{eqnarray*}
In all three possible cases, we find that $\bar{\mu }(D) =\bar{ \mu }(\mathcal{R}(D))$. I.e., ${\bar{\mu}}$ is $\mathcal{R}$-invariant.\smallskip\

We show that the projection of ${\bar{\mu}}$ on the first coordinate gives the invariant measure $\mu$ of the Romik map. To see this, let $A$ be any Borel set in $[0,1]$, we have
\begin{eqnarray*}
\bar{\mu}(A\times [0,1]
)&=&\int_A\int_0^1 \frac{1}{(x+y-2xy)^2}\, {\rm d }y\, {\rm d }x\,\,=\,\, \int_A\frac{1}{1-2x} \int_x^{1-x} \frac{1}{u^2}\, {\rm d}u {\rm d}x\\
&=& \int_A\left[\frac{1}{1-2x} \frac{-1}{u} \right]_{u=x}^{u=1-x}{\rm d}x\,\, =\,\, \int_A \frac{1}{1-2x} \left( \frac{-1}{1-x} + \frac{1}{x}\right)\, {\rm d}x\\
&=& \int_A \frac{1}{x(1-x)}\, {\rm d}x.
\end{eqnarray*}

Ergodicity follows from the fact that the one-dimensional system is ergodic (cf.\ Theorem~3 in Romik's paper~\cite{[R]}), and that $([0,1]^2,\mathcal{B},\mu,\mathcal{R})$ is the (rather ``a'') natural extension of the one-dimensional system (i.e., an almost surely bijective system which has the original system as a factor) is shown in a standard way. For more information on natural extensions, see Rokhlin's celebrated paper~\cite{[Rok]}.
\end{proof}

\begin{remark}\label{rem:OOCF}\rm{
As mentioned in Section~\ref{sec:RomikMap}, in~\cite{[KLL]} the continued fraction map $T_{\rm OOCF}$ of the OOCF was obtained from the Romik map as a \emph{jump transformation}. In~Proposition~2.2 of~\cite{[KLL]} it was also established that $T_{\rm OOCF}$ is ergodic, and that the $T_{\rm OOCF}$-invariant measure has density $\tfrac{1}{x}$ on $[0,1]$. These results also follow from a suitable \emph{induced transformation} of the Romik map $\mathcal{R}$, as we now will outline briefly. The advantage of doing so, is not only that we re-prove the result by Kim \emph{et al.} from~\cite{[KLL]}, we also find the (in fact \emph{a} version of the) natural extension of the OOCF; see also Section~3 in~\cite{[BY]}, where a version of the natural extension of the RCF was obtained from Ito's  natural extension of the Farey-map.\bigskip\

Let $\mathcal{O}=[0,1]\times [0,\tfrac{1}{2}]$, then clearly $\mu (\mathcal{O})>0$ (in fact, $\mu (\mathcal{O})=\infty$), and define for $(x,y)\in\mathcal{O}$
the \emph{return time} $n(x,y)$ \emph{of} $(x,y)$ \emph{to} $\mathcal{O}$ as
$$
n(x,y) = \inf \{ n\in\N\, |\, \mathcal{R}^n(x,y)\in\mathcal{O}\} .
$$
Note that $n(x,y)$ is finite for Lebesgue a.e.\ $(x,y)\in\mathcal{O}$. Next, define the \emph{induced map} $\mathcal{I}_{\mathcal{O}}:\mathcal{O}\to \mathcal{O}$ by
$$
\mathcal{I}_{\mathcal{O}}(x,y) = \mathcal{R}^{n(x,y)}(x,y),\quad \text{for $(x,y)\in \mathcal{O}$}.
$$
Although $\mathcal{O}$ has infinite $\mu$ measure, since $\mathcal{R}$ is invertible, it is easily seen that if $J$ is any rectangle in $\mathcal{O}$ then $\mu(J)=\mu(\mathcal{I}_{\mathcal{O}}(J))$. Thus, on $\mathcal{O}$ the density of the induced map $\mathcal{I}_{\mathcal{O}}$ is the density of the $\mathcal{R}$-invariant $\sigma$-finite, infinite measure $\mu$, given by
$$
\frac{1}{(x+y-2xy)^2},\quad (x,y)\in \mathcal{O}.
$$
To see that $\mathcal{I}_{\mathcal{O}}$ is also ergodic we can adapt the proof for the finite (i.e., probability measure) case (which is an exercise in~\cite{[DK]}). Let $B\subset \mathcal{O}$ be a measurable set which is $\mathcal{I}_{\mathcal{O}}$-invariant. In order to prove ergodicity we now must show that either $\mu (B)=0$ or $\mu (\mathcal{O}\setminus B) = 0$. Define $\mathcal{O}_n=\{ (x,y)\in \mathcal{O}\, |\, n(x,y)=n\}$, and
$$
C_n=\{ (x,y)\in [0,1]\times [0,1]\setminus\mathcal{O}\,\, |\,\, \mathcal{R}(x,y)\not\in \mathcal{O},\dots , \mathcal{R}^{n-1}(x,y)\not\in \mathcal{O}, \mathcal{R}^n(x,y)\in \mathcal{O}\}.
$$
Furthermore, let 
$$
F = \bigcup_{n=1}^{\infty} (C_n\cap \mathcal{R}^{-n}(B)),\quad \text{and $D=F\cup B$}. 
$$
Using the fact that $B=\mathcal{I}^{-1}_{\mathcal{O}}(B)=\bigcup_{n=1}^{\infty} (\mathcal{O}_n\cap \mathcal{R}^{-n}(B))$,
$\mathcal{R}^{-1}(\mathcal{O})=\mathcal{O}_1\cup C_1$, and $\mathcal{R}^{-1}(C_n)=\mathcal{O}_{n+1}\cup C_{n+1}$, it is easy to see $\mathcal{R}^{-1}(D)=D$.

By ergodicity of $\mathcal{R}$, we have $\mu (D)=0$ or $\mu ([0,1]\times [0,1]\setminus D)=0$. If $\mu (D)=0$, we obviously have $\mu (B)=0$. Let us therefore assume that $\mu ([0,1]\times [0,1]\setminus D)=0$. Since $F\subset \mathcal{O}^c$ and $B\subset \mathcal{O}$ are disjoint, we have
$$
\mathcal{O}\setminus B \subset (\mathcal{O}\setminus B)\cup (\mathcal{O}^c\setminus F) = [0,1]\times [0,1]\setminus (B\cup F) = [0,1]\times [0,1]\setminus D,
$$
and it follows that $\mu (\mathcal{O}\setminus B) = 0$.\bigskip\

Now let $E$ be the first coordinate-map of $\mathcal{I}_{\mathcal{O}}$. I.e., for $(x,y)\in \mathcal{O}$, we have
$$
E(x) = \pi_1(\mathcal{I}_{\mathcal{O}}(x,y)),
$$
where $\pi_1(x,y)=x$ is the projection on the first coordinate of the vector $(x,y)$. We \textbf{claim} that
\begin{equation}\label{claim}
T_{\rm OOCF}(x) = E(x),\quad \text{for $x\in [0,1]$}.
\end{equation}
To see this claim holds we consider three cases, where we assume that $(x,y)\in \mathcal{O}$, $k=\inf\{ n\in\N\cup \{ 0\}\, |\, R^n(x)\in [0,\tfrac{1}{2}]\}$, and $\ell = n(x,y) =\inf\{ n\in\N\, |\, \mathcal{R}^n(x,y)\in \mathcal{O}\}$. As $0\leq y\leq \tfrac{1}{2}$, from~\eqref{naturalextensionmapR2} it immediately follows that the value of $\ell$ \emph{only} depends on $x$.
\begin{itemize}
\item[($i$)] Let $0\leq x\leq \tfrac{1}{3}$. Then there exist $n,m\in\N$, $n\geq 3$, such that 
$$
\tfrac{1}{n+1}\leq x < \tfrac{1}{n},\quad \text{and}\,\, \tfrac{1}{m+1}\leq y\leq \tfrac{1}{m}.
$$
From~\eqref{Rimageinterval} and~\eqref{naturalextensionmapR2} it follows that 
$$
\mathcal{R}\left( [\tfrac{1}{n+1},\tfrac{1}{n}]\times [\tfrac{1}{m+1},\tfrac{1}{m}]\right) = [\tfrac{1}{n-1},\tfrac{1}{n-2}]\times [\tfrac{1}{m-1},\tfrac{1}{m-2}],
$$
so the point $(x,y)$ ``moves two intervals to the right and two down''; see Figure~\ref{case(i)}. Clearly $\mathcal{R}(x,y)\in \mathcal{O}$, and we immediately have in this case that $k=0$, so $k+1=1=\ell$, and therefore~\eqref{claim} holds,
$$
T_{\rm OOCF}(x) = R^{k+1}(x) = \pi_1(\mathcal{R}^{\ell}(x,y)) = E(x).
$$
\begin{figure}[h]
$$
\beginpicture
\setcoordinatesystem units <0.4cm,0.4cm>
    \setplotarea x from 0 to 11, y from 4 to 11
  \put {\tiny{$\tfrac{1}{n-2}$}} at 10 1
  \put {\tiny{$\tfrac{1}{n-1}$}} at 6.66666 1
  \put {\tiny{$\tfrac{1}{n}$}} at 5 1
  \put {\tiny{$\tfrac{1}{n+1}$}} at 4 1
  \put {\tiny{$\tfrac{1}{m}$}} at 1 10
  \put {\tiny{$\tfrac{1}{m+1}$}} at 1 6.66666
  \put {\tiny{$\tfrac{1}{m+2}$}} at 1 5
  \put {\tiny{$\tfrac{1}{m+3}$}} at 1 4
\circulararc 60 degrees from 8 4.7 center at 2.5 4
\arrow <4pt> [.2,.8] from 7.9 5 to 8.1 4.4

\setlinear \plot
2 10 11 10 
/
\setlinear \plot
10 2 10 11 
/
\setlinear \plot
2 6.66666 11 6.66666 
/
\setlinear \plot
6.66666 2 6.66666 11 
/
\setlinear \plot
2 5 11 5 
/
\setlinear \plot
5 2 5 11 
/
\setlinear \plot
2 4 11 4 
/
\setlinear \plot
4 2 4 11 
/
\setshadegrid span <1.15pt>
\vshade 4 6.66666 10
5 6.66666 10
/
\setshadegrid span <1.15pt>
\vshade 6.66666 4 5
10 4 5
/
\endpicture
$$ \caption[naturalextension]{The set $[\tfrac{1}{n+1},\tfrac{1}{n}]\times [\tfrac{1}{m+1},\tfrac{1}{m}]$ in $\mathcal{O}$, and its image under $\mathcal{R}$. Here $n\geq 3$} \label{case(i)}
\end{figure}

\item[($ii$)] Let $\tfrac{1}{3}\leq x\leq \tfrac{1}{2}$. Then from~\eqref{naturalextensionmapR2} it is immediately clear that $\mathcal{R}([\tfrac{1}{3},\tfrac{1}{2}]\times [0,1]) = [0,1]\times [\tfrac{1}{3},\tfrac{1}{2}]$, and again $k=1=\ell$, and we have the same conclusion as in case ($i$).

\item[ ]

\item[($iii$)] Let $\tfrac{1}{2}\leq x< 1$. Then there exist $n,m\in\N$, $n\geq 1$, and $m\geq 2$, such that 
$$
\tfrac{n}{n+1}\leq x < \tfrac{n+1}{n+2},\quad \text{and}\,\, \tfrac{1}{m+1}\leq y\leq \tfrac{1}{m}.
$$
Since $\mathcal{R}([\tfrac{1}{2},1]\times [0,1])= [0,1]\times [\tfrac{1}{2},1]$ (see also Figure~\ref{figurenatextandimageunderR}), it follows that $\ell\geq 1$. Also, in this case $x\not\in [0,\tfrac{1}{2})$, and it follows that $k\geq 1$. From~\eqref{Rimageinterval} and~\eqref{naturalextensionmapR2}  we have
$$
\mathcal{R}([\tfrac{n}{n+1},\tfrac{n+1}{n+2}]\times [\tfrac{1}{m+1},\tfrac{1}{m}]) = [\tfrac{n-1}{n},\tfrac{n}{n+1}]\times [\tfrac{m+1}{2m+1},\tfrac{m}{2m-1}],
$$
and in general, for $n\geq 1$, $m\geq 0$,
$$
\phantom{XXXX}\mathcal{R}([\tfrac{n-1}{n},\tfrac{n}{n+1}]\times [\tfrac{m}{m+1},\tfrac{m+1}{m+2}] = [\tfrac{n-1}{n},\tfrac{n}{n+1}]\times [\tfrac{m+1}{m+2},\tfrac{m+2}{m+3}]\subset [0,1]\times [\tfrac{1}{2},1].
$$
So under $\mathcal{R}$ we see that the points $(x,y)\in \mathcal{O}$ with $\tfrac{1}{2}\leq x<1$ ``moves first up and to the left,'' with its second coordinate in $[\tfrac{1}{2},\tfrac{2}{3}]$, and then (when $\pi_1(R(x,y))\in [\tfrac{1}{2},1]$) moves ``further up,'' as
$$
\frac{1}{2-\tfrac{m}{m+1}} = \frac{m+1}{m+2},\quad \text{for $m\in\N$}.
$$
Note that the first coordinate moves to the left after each application of $\mathcal{R}$. After finite many steps the orbit of $(x,y)$ under $\mathcal{R}$ is in $[0,\tfrac{1}{2}]\times [\tfrac{1}{2},1]$, and this number of steps is by definition $k$. Applying $\mathcal{R}$ one more time ``brings us back to $\mathcal{O}$ for the first time after we left it.'' So $k+1=\ell$.\smallskip\
\end{itemize}
These three cases prove the \textbf{claim}.\medskip\

The density\footnote{For ease of terminology we use here and in the rest of the paper the word \emph{density} loosely, as the measure is infinite. It would have been more appropriate to use the expression Radon-Nikodym derivative.} of the $T_{\rm OOCF}$-invariant measure is the projection of $\frac{1}{(x+y-2xy)^2}$ on the first coordinate;
\begin{eqnarray*}
\int_0^{\tfrac{1}{2}} \frac{1}{(x+y-2xy)^2}\, {\rm d}y &=& \quad \text{(set $u=x+y-2xy$, then ${\rm d}u=(1-2x)\, {\rm d}y$)}\\
&=& \int_x^{\tfrac{1}{2}} \frac{1}{u^2}\cdot \frac{1}{1-2x}\, {\rm d}u\,\, =\,\, \frac{1}{1-2x} \left[ \frac{-1}{u}\right]_x^{\tfrac{1}{2}}\\
&=& \frac{1}{1-2x}\left[ -2+\frac{1}{x}\right]\,\, =\,\, \frac{1}{x}.
\end{eqnarray*}
}\hfill $\triangle$
\end{remark}

\section{A strange continued fraction related to the Romik map}\label{sec:RomikMapCF}
Note that we can rewrite the Romik map $R$ as follows
\begin{equation}\label{RomikMapRrewrite}
R(x) = \begin{cases}
\displaystyle{\left( \frac{1}{x}-2\right)^{-1}}, & \text{for $0\leq x\leq \frac{1}{3}$}\medskip\ ;\\
\phantom{X}\displaystyle{\frac{1}{x}-2}, & \text{for $\tfrac{1}{3}\leq x\leq\tfrac{1}{2}$}\medskip\ ;\\
\displaystyle{-\left( \frac{1}{x}-2\right)}, & \text{for $\tfrac{1}{2}\leq x\leq 1$}.
\end{cases}
\end{equation}
So if we define 
$$
\varepsilon (x) = \begin{cases}
+1, & \text{if $0\leq x<\tfrac{1}{2}$}; \medskip\ \\
-1, & \text{if $\tfrac{1}{2}\leq x\leq 1$},
\end{cases}\quad \text{and}\quad
\delta (x) = \begin{cases}
-1, & \text{if $0\leq x<\tfrac{1}{3}$}; \medskip\ \\
+1, & \text{if $\tfrac{1}{3}\leq x\leq 1$},
\end{cases}
$$
and for $x\in [0,1]$ and $n\in\N$
$$
\varepsilon_n=\varepsilon_n(x):=\varepsilon (R^{n-1}(x)),\quad \text{and}\quad \delta_n=\delta_n(x):=\delta (R^{n-1}(x)),
$$
it follows immediately from~\eqref{RomikMapRrewrite} that for $x\in [0,1]$,
$$
R(x) = \varepsilon_1(x)\left( \frac{1}{x}-2\right)^{\delta_1(x)},
$$
from which we immediately find that\footnote{Here and in the rest of this paper we suppress the dependence of $\varepsilon_n$ and $\delta_n$ of $x$.}
\begin{equation}\label{RomikMapOnceUsed}
x = \frac{1}{2+\varepsilon_1 R(x)^{\delta_1}}.
\end{equation}
Similarly, we find that
\begin{equation}\label{RomikMapOnceUsedagain}
R(x) = \frac{1}{2+\varepsilon_2 (R^2(x))^{\delta_2}},\quad R^2(x) = \frac{1}{2+\varepsilon_3 (R^3(x))^{\delta_3}}
\end{equation}
and substituting these two equalities in~\eqref{RomikMapOnceUsed} yields
$$
x = \frac{1}{2+\displaystyle{\frac{\varepsilon_1}{\Big( 2+\varepsilon_2 (R^2(x))^{\delta_2}\Big)^{\delta_1}}}} 
 = \frac{1}{2+\displaystyle{\frac{\varepsilon_1}{\Big( 2+ \displaystyle \frac{\varepsilon_2}{(2+\varepsilon_3 (R^3(x))^{\delta_3})^{\delta_2}}\Big)^{\delta_1}}}}.
$$
Repeating this process $n$-times we would find inductively a complicated continued fraction, where some parts might collapse whenever $\delta_i=-1$ for $1\leq i\leq n$. It is insightful to see how this collapsing process works step by step using truncations (as one would do in case of the RCF). We first consider an example, where we use~\eqref{RomikMapOnceUsed} explicitly:
\begin{equation}\label{RomikMapOnceUsedvariant2}
x = \frac{1}{2+\varepsilon_1 R(x)^{\delta_1}}= \begin{cases}
\frac{1}{2+\frac{1}{R(x)}}, & \text{if $x\in [0,\tfrac{1}{3})$};\medskip\ \\
\frac{1}{2+R(x)}, & \text{if $x\in [\tfrac{1}{3},\tfrac{1}{2})$};\medskip\ \\
\frac{1}{2-R(x)}, & \text{if $x\in [\tfrac{1}{2},1]$}.
\end{cases}
\end{equation}

\begin{example}\label{example1}{\rm Let $x\in [\tfrac{8}{51},\tfrac{19}{121}]=[0.1568627\dots,0.157024\dots]$. For example, $x=g/3.938=0.156941084\dots$ from Example~\ref{ex:periodicexpansion}. In fact, the interval $[\tfrac{8}{51},\tfrac{19}{121}]$ is the cylinder of all $x$ with\footnote{These values are easily determined by using the beginning of the $R$-orbit of $x$ expressed as RCF-expansions; see also Table~\ref{table2} in Example~\ref{ex:periodicexpansion}.}
$$
\begin{array}{llll}
(\delta_1,\varepsilon_1)=(-1,1), & (\delta_2,\varepsilon_2)=(-1,1), & (\delta_3,\varepsilon_3)=(1,1), & (\delta_4,\varepsilon_4)=(1,1), \\
(\delta_5,\varepsilon_5)=(1,-1), & (\delta_6,\varepsilon_6)=(1,-1), & (\delta_7,\varepsilon_7)=(-1,1) & (\delta_8,\varepsilon_8)=(-1,1).
\end{array}
$$

In general, for $n\in\N$ and $(\delta_1,\dots,\delta_n,\varepsilon_1,\dots,\varepsilon_n)\in \{-1,1\}^{2n}$, with $(\delta_i,\varepsilon_i)\neq (-1,-1)$ for $1\leq i\leq n$, cylinders $\Delta_n=\Delta_n (\delta_1,\dots,\delta_n;\varepsilon_1,\dots,\varepsilon_n)$ are defined as
$$
\Delta_n (\delta_1,\dots,\delta_n;\varepsilon_1,\dots,\varepsilon_n) := \{ x\in [0,1]\, \big{|}\, \delta_i(x)=\delta_i, \varepsilon_i(x)=\varepsilon_i, \text{for $i=1,\dots,n$} \}.
$$
As all three branches of $R$ are full, we see that for all $n$ all cylinders are intervals of positive Lebesgue measure.\smallskip\

From~\eqref{RomikMapOnceUsedvariant2} we now see that for $x=g/3.938$,
$$
x = \frac{1}{2+\displaystyle \frac{1}{R(x)}}.
$$
In the RCF we would now set $R(x)$ equal to 0 in order to find the first convergent, which\footnote{Setting $1/0=\infty$ and $1/\infty = 0$.} would yield $0$ as the first convergent of $x$. Setting $R(x)$ equal to 1 we would find $1/3$ as first convergent of $x$. So either way we find as a possible first convergent of $x$ one of the end points of the cylinder $\Delta_1 (-1;1)=[0,\tfrac{1}{3}]$ since $R(x)\in [0,1]$.\smallskip\

Using the first equality in~\eqref{RomikMapOnceUsedagain} we find, since $\delta_2=-1$ and $\varepsilon_2=+1$,
$$
x = \frac{1}{2+\displaystyle \frac{1}{R(x)}} = \frac{1}{2+ \displaystyle \frac{1}{\displaystyle \frac{1}{2+\displaystyle \frac{1}{R^2(x)}}}} = \frac{1}{4+\displaystyle \frac{1}{R^2(x)}} .
$$
Setting $R^2(x)$ equal to 0 would again yield $0$ as the first convergent of $x$, while setting $R^2(x)$ equal to 1 would yield $\tfrac{1}{5}$ as the second convergent of $x$. Note that $R^2(x)\in [0,1]$ yields that $\Delta_2 (-1,-1;1,1)=[0,\tfrac{1}{5}]$.\smallskip\

Next, using the second equality in~\eqref{RomikMapOnceUsedagain} we find, since $\delta_3=+1=\varepsilon_3$,
$$
x = \frac{1}{4+\displaystyle \frac{1}{R^2(x)}} = \frac{1}{4+ \displaystyle \frac{1}{\displaystyle \frac{1}{2+ R^3(x)}}} = \frac{1}{6+R^3(x)}.
$$
So after ``collapsing'' we see that setting $R^3(x)$ equal to 0 we find  $\tfrac{1}{6}$ not as the third but as the first convergent of $x$, while setting $R^3(x)$ equal to 1 yields $\tfrac{1}{7}$ not as the third but as the first convergent; the ``collapsing'' has ``wiped out'' earlier convergents! It goes without saying that these two points $\tfrac{1}{7}$ resp.\ $\tfrac{1}{6}$ are the endpoints of $\Delta_3 (-1,-1,+1;+1,+1,+1)$.\smallskip\

The next three steps are more straightforward,
\begin{equation}\label{3steps}
x = \frac{1}{6+\displaystyle \frac{1}{2+R^4(x)}} = \frac{1}{6+\displaystyle \frac{1}{2+\displaystyle \frac{1}{2-R^5(x)}}} = \frac{1}{6+\displaystyle \frac{1}{2+\displaystyle \frac{1}{2-\displaystyle \frac{1}{2-R^6(x)}}}} 
\end{equation}
Setting $R^i(x)$ equal to 0 for $i=4,5,6$ in~\eqref{3steps} yields as convergents $\tfrac{1}{6}$ and $\tfrac{2}{13}$ (for $i=4$),  $\tfrac{1}{6}$, $\tfrac{2}{13}$, and $\frac{5}{32}$ (for $i=5$), $\tfrac{1}{6}$, $\tfrac{2}{13}$, $\frac{5}{32}$, and $\tfrac{8}{51}$ (for $i=6$).\smallskip\

As the last three steps were reminiscent of ``ordinary'' continued fraction algorithms, since $\delta_7=-1$ (and automatically $\varepsilon_7=+1$, as the pair $(\delta_i,\varepsilon_i)=(-1,-1)$ does not occur), the next step will involve again a ``collapse.'' From the last equation in~\eqref{3steps} we see, that
$$
x =  \frac{1}{6+\displaystyle \frac{1}{2+\displaystyle \frac{1}{2-\displaystyle \frac{1}{2-R^6(x)}}}} =  \frac{1}{6+\displaystyle \frac{1}{2+\displaystyle \frac{1}{2-\displaystyle \frac{1}{2-\displaystyle \frac{1}{2+\displaystyle \frac{1}{R^7(x)}}}}}}.
$$
}\hfill $\triangle$
\end{example}

The pattern is clear! Every time we have $\delta_n=-1$ (and hence $\varepsilon_n=+1$), we get a ``collapsible fraction'' of the form
\begin{equation}\label{collapsingfractions}
\frac{1}{2+\displaystyle \frac{1}{{\color{red}{0}}+R^n(x)}} = \frac{1}{2+\displaystyle \frac{1}{{\color{red}{0}}+\displaystyle\frac{1}{2+\varepsilon_{n+1}(R^{n+1}(x))^{\delta_{n+1}}}}} = \frac{1}{4+\varepsilon_{n+1}(R^{n+1}(x))^{\delta_{n+1}}}.
\end{equation}
If $\delta_{n+1}=\cdots=\delta_{n+m}=-1$, $\delta_{n+m+1}=+1$, for some $m\in\N\cup \{0\}$ (and therefore $\varepsilon_{n+1}=\cdots=\varepsilon_{n+m}=+1$), we can repeat this ``collapsing'' another $m$ times to arrive at (and note that $\delta_{n+m+1}=1$)
\begin{equation}\label{aftercollapsingfractions}
\frac{1}{2+\displaystyle \frac{1}{{\color{red}{0}}+R^n(x)}} = \frac{1}{2(m+2)+\varepsilon_{n+m+1}(R^{n+m+1}(x))}.
\end{equation}
Note that the partial quotient $2(m+2)$ in~\eqref{aftercollapsingfractions} will not change anymore, even if there are ``collapses'' after ``time'' $n+m+1$. We see that after ``collapsing'', we are left with a continued fraction expansion of $x$ with \textbf{only} even partial quotients. The above described ``collapsing'' is in fact the \emph{jump transformation} described by Kim \emph{et al}.\ in~\cite{[KLL]}. We also see from~\eqref{collapsingfractions} and~\eqref{aftercollapsingfractions}, where we inserted ${\color{red}{0}}$ as a ``missing'' partial quotient, that before collapsing every $x\in [0,1]$ has a formal continued fraction expansion of the form
\begin{equation}\label{NewCF1}
x = \frac{1}{a_1 +\displaystyle \frac{\rho_1}{a_2+\displaystyle\frac{\rho_2}{a_3+\ddots}}},
\end{equation}
with $\rho_n\in\{ -1,+1\}$ and $a_n\in \{ 0,2\}$, and where $a_n=0$ implies $\rho_n=+1$ (and also $\rho_{n-1}=+1$ and $a_{n-1}=2$). We denote the continued fraction in~\eqref{NewCF1} as
\begin{equation}\label{NewCF2}
x = [0; 1/a_1,\rho_1/a_2,\rho_2/a_3,\dots].
\end{equation}
We call the continued fraction expansion of $x$ in~\eqref{NewCF1} and~\eqref{NewCF2} with the above constraints the \emph{Romik} (\emph{continued fraction}) \emph{expansion} of $x$.

For example, if $x=g/3.938=0.156941084\dots$ (see Example~\ref{example1}), then
$$
x = [0; 1/2, 1/0, 1/2, 1/0, 1/2, 1/2, 1/2, -1/2, -1/2, \dots ],
$$
which we ``abbreviate'' as
\begin{equation}\label{NewCFexample}
x = [0;(1/2, 1/0)^2, (1/2)^3, (-1/2)^2,  \dots ].
\end{equation}
In fact, a rewrite of~\eqref{RomikMapOnceUsedvariant2}  yields how for $n\in\N\cup\{ 0\}$ this ``strange'' continued fraction is ``built up'' (in fact~\eqref{RomikMapOnceUsedvariant2} is the case $n=0$ of~\eqref{RomikMapOnceUsedvariant3}):
\begin{equation}\label{RomikMapOnceUsedvariant3}
R^n(x) = \begin{cases}
\frac{1}{2+\frac{1}{{\color{red}{0}} + R^{n+1}(x)}}, & \text{if $R^n(x)\in [0,\tfrac{1}{3})$; i.e., $\delta_{n+1}=-1, \varepsilon_{n+1}=1$};\medskip\ \\
\frac{1}{2+R^{n+1}(x)}, & \text{if $R^n(x)\in [\tfrac{1}{3},\tfrac{1}{2})$; i.e., $\delta_{n+1}=1, \varepsilon_{n+1}=1$};\medskip\ \\
\frac{1}{2-R^{n+1}(x)}, & \text{if $R^n(x)\in [\tfrac{1}{2},1]$; i.e., $\delta_{n+1}=1, \varepsilon_{n+1}=-1$}.
\end{cases}
\end{equation}\medskip\

We now formalize Example~\ref{example1} and the remarks following it in a few theorems.

\begin{theorem}\label{thm:RomikCF}
Let $x\in (0,1)$, let $n\in\N$, and set $m=\sum_{j=0}^{n-1} 1_{[0,\tfrac{1}{3})}\left( R^j(x)\right)$. Then there exist $a_1,\dots , a_{n+m}\in \{ 0,2\}$ and $\rho_1,\dots,\rho_{n+m}\in \{ -1,+1\}$, such that $a_1=2$, and $a_i=0$ implies $a_{i-1}=2$ and $\rho_{i}=+1$, for $2\leq i\leq n+m$, and
\begin{equation}\label{RomikCF}
x = \frac{1}{a_1 +\displaystyle\frac{\rho_1}{a_2+\ddots +\displaystyle\frac{\rho_{n+m-1}}{a_{n+m} + \rho_{n+m}R^n(x)}}},
\end{equation}
\end{theorem}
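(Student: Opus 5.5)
Induction on $n$ gives the statement directly, using the one-step identity~\eqref{RomikMapOnceUsedvariant3}: each application of $R$ appends either one level $(2,\pm R^{n+1})$ or two levels $(2,1/(0+R^{n+1}))$ to the continued fraction. To keep the induction clean, we carry a slightly stronger statement. For $n\ge 1$ there is a finite expansion of length $n+m_n$, with $m_n=\sum_{j=0}^{n-1}1_{[0,1/3)}(R^j(x))$, whose tail is $a_{n+m_n}+\rho_{n+m_n}R^n(x)$, where we allow $\rho_{n+m_n}R^n(x)$ to stand as written. The point is that the last level always has the shape ``digit plus signed $R^n(x)$''. Below, $y:=R^n(x)\in[0,1]$ and $m:=m_n$.

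\textbf{Base case} $n=1$. Apply~\eqref{RomikMapOnceUsedvariant2}. If $x\in[\tfrac13,\tfrac12)$ or $x\in[\tfrac12,1]$, we get $x=1/(2+\rho_1R(x))$ with $\rho_1=\varepsilon_1$. Here $m=0$, $a_1=2$, and there are $n+m=1$ levels. If $x\in(0,\tfrac13)$, then $m=1$ and $x=1/(2+\frac{1}{0+R(x)})$. This gives $a_1=2$, $\rho_1=1$, $a_2=0$, $\rho_2=1$, so there are two levels, and the constraint ``$a_2=0$ implies $a_1=2$ and $\rho_2=+1$'' holds.

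\textbf{Inductive step.} Assume~\eqref{RomikCF} holds for $n$, with tail $a_{n+m}+\rho_{n+m}y$. Substitute for $y$ using~\eqref{RomikMapOnceUsedvariant3}, distinguishing three cases for $y$.
\begin{itemize}
\item[(a)] If $y\in[\tfrac13,1]$, then $y=1/(2+\varepsilon_{n+1}R^{n+1}(x))$. The tail becomes $a_{n+m}+\rho_{n+m}/(2+\varepsilon_{n+1}R^{n+1}(x))$. Set $a_{n+m+1}=2$ and $\rho_{n+m+1}=\varepsilon_{n+1}$. Then $m$ is unchanged, the length grows by one, and the constraints are untouched since the new digit is $2$.
\item[(b)] If $y\in[0,\tfrac13)$, then $y=1/(2+\frac{1}{0+R^{n+1}(x)})$. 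Set $a_{n+m+1}=2$, $\rho_{n+m+1}=1$, $a_{n+m+2}=0$, $\rho_{n+m+2}=1$. Then $m$ increases by $1$, the length grows by $2=1+1$, and the new $0$ is preceded by a $2$ and carries $\rho=+1$.
\end{itemize}
In both cases the previous $\rho_{n+m}$ is kept as the numerator. This is legitimate because $\rho_{n+m}y=\rho_{n+m}\cdot\frac{1}{\cdots}$ is exactly $\frac{\rho_{n+m}}{\cdots}$. The count of levels therefore matches $(n+1)+m_{n+1}$.

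The only delicate point is bookkeeping at boundary and degenerate values. If $R^j(x)=0$, a term such as $1/(0+0)$ appears. More precisely, if $R^n(x)=0$ for some $n$ (rational $x$), the formal identity $y=1/(2+1/(0+R^{n+1}(x)))$ must be read with $1/0=\infty$ and $1/\infty=0$, as the paper does. Since $0$ is the left endpoint of $[0,\tfrac13)$, this identity holds as an extended-real identity, and the continued fraction~\eqref{RomikCF} remains valid under that convention. I also check that the endpoints $\tfrac13,\tfrac12$ are assigned consistently with the definitions of $\varepsilon$ and $\delta$: at $\tfrac13$ the middle branch is used, so $\delta=1$. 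Apart from this convention, the proof is a purely formal substitution.
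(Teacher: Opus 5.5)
Your proof is correct and is essentially the paper's argument: induction on $n$, substituting the one-step identity~\eqref{RomikMapOnceUsedvariant3} for $R^n(x)$ and appending either one level $(a,\rho)=(2,\varepsilon_{n+1})$ or two levels $(2,+1),(0,+1)$. Your case (a) just merges the paper's two sub-cases $[\tfrac13,\tfrac12)$ and $[\tfrac12,1]$, and your explicit handling of $R^n(x)=0$ via $1/0=\infty$ is a small improvement, since the paper's inductive case split uses the open interval $(0,\tfrac13)$ and silently omits that value.
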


\begin{proof}
The statement that $a_1=2$ follows from~\eqref{RomikMapOnceUsedvariant2}, and that $a_i=0$ implies $a_{i-1}=2$ and $\rho_i=+1$, for $2\leq i\leq n+m$, is an immediate consequence of the first line in~\eqref{RomikMapOnceUsedvariant3}. For statement~\eqref{RomikCF} we give a proof by mathematical induction.
\begin{itemize}
\item[($i$)] The case $n=1$. In this case we ave three possible sub-cases.
\begin{itemize}
\item[($j$)] $x\in(0,\tfrac{1}{3})$. According to~\eqref{RomikMapOnceUsedvariant2},
$$
x = \frac{1}{2+\displaystyle\frac{1}{0+R(x)}}.
$$
We see that $m=1$, $a_1=2$, $a_{n+m}=a_2=0$, $\rho_1=+1$, $\rho_2=+1$ (since $R(x)\geq 0$), and~\eqref{RomikCF} holds for $n=1$ in this case.\smallskip\

\item[($jj$)] $x\in [\tfrac{1}{3},\tfrac{1}{2})$. According to~\eqref{RomikMapOnceUsedvariant2},
$$
x = \frac{1}{2+R(x)}.
$$
We see that $m=0$, $a_1=2$, $\rho_1=+1$ (since $R(x)\geq 0$), and~\eqref{RomikCF} holds for $n=1$ in this case.\smallskip\

\item[($jjj$)] $x\in [\tfrac{1}{2},1)$. This case is similar to the previous case ($jj$), the only difference being that now $\rho_1=-1$ (since $R(x)\geq 0$). We again see that~\eqref{RomikCF} holds for $n=1$ in this case.\medskip\
\end{itemize}

\item[($ii$)] \emph{Induction hypothesis}: Assume~\eqref{RomikCF} holds for some $n\in\N$. We must show it holds for $n+1$. Again we have three possible sub cases.\smallskip\

\begin{itemize}
\item[($\ell$)] $R^n(x)\in(0,\tfrac{1}{3})$. According to~\eqref{RomikMapOnceUsedvariant2}, where $R^n(x)$ is replaced by
$$
\frac{1}{2+\displaystyle\frac{1}{0 + R^{n+1}(x)}}, 
$$
it follows that~\eqref{RomikCF} becomes
$$
x = \frac{1}{a_1 +\displaystyle\frac{\rho_1}{a_2+\ddots +\displaystyle\frac{\rho_{n+m-1}}{a_{n+m} + \displaystyle\frac{\rho_{n+m}}{2+\displaystyle\frac{1}{0 + R^{n+1}(x)}}}}}.
$$
Note that now
\begin{eqnarray*}
{\phantom{XXX}}\sum_{j=0}^n 1_{[0,\tfrac{1}{3})}(R^j(x)) &=& \sum_{j=0}^{n-1} 1_{[0,\tfrac{1}{3})}(R^j(x)) + 1_{[0,\tfrac{1}{3})}(R^n(x)) \\
&=& m+1,
\end{eqnarray*}
and setting 
$$
a_{n+m+1}=2,\, a_{n+m+2}=0,\, \rho_{n+m+1}=+1,
$$
and $\rho_{n+m+2}=+1$ (since $R^{n+1}(x)\geq 0$), we see that if~\eqref{RomikCF} holds for some $n\in\N$ and $R^n(x)\in [0,\tfrac{1}{3})$, \eqref{RomikCF} also holds for $n+1$.\smallskip\

\item[($\ell\ell$)] $R^n(x)\in [\tfrac{1}{3},\tfrac{1}{2})$. According to~\eqref{RomikMapOnceUsedvariant2}, where $R^n(x)$ is replaced by
$$
\frac{1}{2+R^{n+1}(x)}, 
$$
it follows that~\eqref{RomikCF} becomes
$$
x = \frac{1}{a_1 +\displaystyle\frac{\rho_1}{a_2+\ddots +\displaystyle\frac{\rho_{n+m-1}}{a_{n+m} + \displaystyle\frac{\rho_{n+m}}{2+ R^{n+1}(x)}}}}.
$$
Note that now
\begin{eqnarray*}
{\phantom{XXX}}\sum_{j=0}^n 1_{[0,\tfrac{1}{3})}(R^j(x)) &=& \sum_{j=0}^{n-1} 1_{[0,\tfrac{1}{3})}(R^j(x)) + 1_{[0,\tfrac{1}{3})}(R^n(x)) \\
&=& m+0,
\end{eqnarray*}
and setting 
$$
a_{n+m+1}=2, \rho_{n+m+1}=+1\,\, \text{(since $R^{n+1}(x)\geq 0$)},
$$
we see that if~\eqref{RomikCF} holds for some $n\in\N$ and $R^n(x)\in [\tfrac{1}{3},\tfrac{1}{2})$, \eqref{RomikCF} also holds for $n+1$.\smallskip\

\item[($\ell\ell\ell$)] $R^n(x)\in [\tfrac{1}{2},1]$. This case is similar to the previous case ($\ell\ell$), the only difference being that now $\rho_{n+m+1}=-1$ (since $R^{n+1}(x)\geq 0$). We see that if~\eqref{RomikCF} holds for some $n\in\N$ and $R^n(x)\in [\tfrac{1}{2},1]$, \eqref{RomikCF} also holds for $n+1$.\
\end{itemize}
\end{itemize}
From ($i$) and ($ii$) it now follows that~\eqref{RomikCF} holds for all $n\in\N$.
\end{proof}

\begin{example}\label{example2}{\rm  (continuation of Example~\ref{example1}). Again, let $x=g/3.938=0.156941084\dots$. Taking finite truncations in~\eqref{NewCFexample} yields as Romik-convergents (the convergents in {\color{blue}blue} are also RCF-convergents of $x$ (from Table~\ref{table1}), the convergents in {\color{red}red} occurred earlier),
$$
\begin{array}{ll}
\frac{p_1}{q_1}=[0; 1/2] = \tfrac{1}{2}; & {\color{blue}\frac{p_2}{q_2}} = [0;1/2, 1/0] = {\color{blue}0}; \\
\frac{p_3}{q_3}=[0; 1/2, 1/0,1/2]=\tfrac{1}{4}; & {\color{red}\frac{p_4}{q_4}} = [0; 1/2, 1/0,1/2,1/0]={\color{red}0};\\
{\color{blue}\frac{p_5}{q_5}} = [0; 1/2, 1/0,1/2,1/0,1/2]={\color{blue}\tfrac{1}{6}}; & {\color{blue}\frac{p_6}{q_6}} = [0; 1/2, 1/0,1/2,1/0,1/2,1/2]={\color{blue}\tfrac{2}{13}}.
\end{array}
$$
As the notation is a bit cumbersome, we write $\tfrac{p_6}{q_6}$ as
$$
\tfrac{p_6}{q_6}=[0; (1/2, 1/0)^2,(1/2)^2].
$$
With this notation, which we used for $x$ in~\eqref{NewCFexample},
$$
\tfrac{p_7}{q_7}=[0; (1/2, 1/0)^2,(1/2)^3]=\tfrac{5}{32};\quad {\color{blue}\tfrac{p_8}{q_8}}\, = [0; (1/2, 1/0)^2,(1/2)^3,-1/2] = {\color{blue}\tfrac{8}{51}}; 
$$
and $\tfrac{p_9}{q_9}=[0; (1/2, 1/0)^2,(1/2)^3,(-1/2)^2] = \tfrac{11}{70}$.\medskip\

If we compare these Romik-convergents with the RCF-convergent of $x$ in Example~\ref{ex:periodicexpansion}, we see that the third RCF-convergent $\tfrac{P_3}{Q_3}=\tfrac{3}{19}$ from Table~\ref{table1} is \textbf{missing}. From this example it is already immediately clear that not all RCF-convergents are Romik-convergents; something which is certainly the case of the Farey-map.\hfill $\triangle$
}
\end{example}

\section{Convergents of the Romik map}\label{convergentsNewCF}
Taking finite truncations in~\eqref{NewCF1} or~\eqref{NewCF2} yields a sequence of rational numbers $(p_n/q_n)_{n\geq 1}$, where $p_n,q_n\in\Z$, $q_n>0$, $\text{gcd}\{ p_n,q_n\}=1$, and
\begin{equation}\label{NewCF3}
\frac{p_n}{q_n} = \frac{1}{a_1 +\displaystyle \frac{\rho_1}{a_2+ \ddots +\displaystyle\frac{\rho_{n-1}}{a_n}}},
\end{equation}
which we abbreviate as
\begin{equation}\label{NewCF4}
\frac{p_n}{q_n} = [0; 1/a_1,\rho_1/a_2,\dots,\rho_{n-1}/a_n].
\end{equation}
This sequence $(p_n/q_n)_{n\geq 1}$ is finite if $x\in [0,1]$ is a rational number and there exists an $n\in\N$ such that $R^n(x)=0$, and is infinite otherwise; see also Proposition~\ref{prop:RationalsHaveFiniteOrbit}.\smallskip\

Setting $\rho_0:=1$, we now define matrices\footnote{With the obvious restriction on $n$ if the formal continued fraction expansion~\eqref{NewCF2} of $x$ is finite.} $A_n=A_n(x)$ and $M_n=M_n(x)$ as
\begin{equation}\label{AnMnmMatrices}
A_n=A_n(x) = \begin{pmatrix}
0 & \rho_{n-1}\\
1 & a_n
\end{pmatrix},\quad
M_n=M_n(x)=A_1A_2\cdots A_n,\quad \text{for $n\in\N$}.
\end{equation}
The matrices $A_n$ and $M_n$ have integer entries and determinants $\pm 1$, so elements of the modular group $\text{SL}(2,\Z)$. Now every element $A$ of the modular group can be viewed as a \emph{M\"obius} (or: \emph{fractional linear}) transformation; if
$$
A = \begin{pmatrix}
a & b\\
c & d
\end{pmatrix},\quad \text{with $a,b,c,d\in\Z$ and $ad-bc=\pm 1$},
$$
then for $x\in\R$ we define the map $x\mapsto A\cdot x\in\R\cup\{\pm\infty\}$ as
$$
A\cdot x = \begin{pmatrix}
a & b\\
c & d
\end{pmatrix}\cdot x := \frac{ax+b}{cx+d}.
$$
It is an easy exercise to see, that if $A,B\in \text{SL}(2,\Z)$, we have for $x\in\R$,
$$
(AB)\cdot x = A\cdot (B\cdot x).
$$
\begin{remark}\label{remarkonmatrices}{\rm 
The Romik map $R$ from~\eqref{RomikMapR} can be written as a M\"obius transformation, 
$$
R(x) = \begin{cases}
\begin{pmatrix}
1 & 0\\
-2 & 1
\end{pmatrix}\cdot x, & \text{if $x\in [0,\tfrac{1}{3}]$}\\
\begin{pmatrix}
-2 & 1\\
1 & 0
\end{pmatrix}\cdot x, & \text{if $x\in [\tfrac{1}{3},\tfrac{1}{2}]$}\\
\begin{pmatrix}
2 & -1\\
1 & 0
\end{pmatrix}\cdot x, & \text{if $x\in [\tfrac{1}{2},1]$},  
\end{cases}
$$
and consequently we have, if we multiply left \& right with the inverses of these matrices,
$$
x = \begin{cases}
\begin{pmatrix}
1 & 0\\
2 & 1
\end{pmatrix}\cdot R(x), & \text{if $x\in [0,\tfrac{1}{3}]$}\\
\begin{pmatrix}
0 & 1\\
1 & 2
\end{pmatrix}\cdot R(x), & \text{if $x\in [\tfrac{1}{3},\tfrac{1}{2}]$}\\
\begin{pmatrix}
0 & 1\\
-1 & 2
\end{pmatrix}\cdot R(x), & \text{if $x\in [\tfrac{1}{2},1]$},  
\end{cases}
$$
For $x\in [0,\tfrac{1}{3}]$ we have $a_1=2$, $a_2=0$, and indeed
$$
M_2=A_1A_2 = \begin{pmatrix}
0 & 1\\
1 & 2   
\end{pmatrix}
\begin{pmatrix}
0 & 1\\
1 & 0
\end{pmatrix} = \begin{pmatrix}
1 & 0\\
2 & 1
\end{pmatrix},
$$
If $x\in [\tfrac{1}{3},\tfrac{1}{2}]$ we have $a_1=2$, and 
$$
M_1=A_1= \begin{pmatrix}
0 & 1\\
1 & 2   
\end{pmatrix}.
$$
In case $x\in [\tfrac{1}{2},1]$, so $a_1=1$ and $\rho_1=-1$, the inverse of the matrix of the Romik maps is slightly different from $A_1$. This is due to the way we truncate the formal Romik continued fraction expansion; the $-1$ of $\rho_1$ is ``given'' to the next matrix $A_2$; see also definition~\eqref{AnMnmMatrices} of the matrix $A_n$. As we must have in this case that $a_2=2$, we see that
$$
\begin{pmatrix}
0 & 1\\
-1 & 2
\end{pmatrix}\cdot 0 = \tfrac{1}{2} = \begin{pmatrix}
0 & 1\\
1 & 2
\end{pmatrix}\cdot 0,
$$
\mbox{ and }
$$
\begin{pmatrix}
0 & 1\\
-1 & 2
\end{pmatrix} \begin{pmatrix}
0 & 1\\
1 & 2
\end{pmatrix} = \begin{pmatrix}
0 & 1\\
1 & 2
\end{pmatrix} \begin{pmatrix}
0 & -1\\
1 & 2
\end{pmatrix}.
$$
The overall effect remains the same; we obtain convergents $\tfrac{p_1}{q_1}=\tfrac{1}{2}$, $\tfrac{p_2}{q_2}=\tfrac{2}{3}$.\hfill$\triangle$
}
\end{remark}

Using induction, the following result is almost immediate, see also~\cite{[DK1],[IK]}.

\begin{lemma}\label{lemma:matrices1}
Let $x\in [0,1]$, with formal Romik continued fraction expansion~\eqref{NewCF1}=\eqref{NewCF2}, and convergents $\tfrac{p_n}{q_n}$, for $n\geq 1$ given by~\eqref{NewCF3}=\eqref{NewCF4}. Then we have, again with the obvious restriction on $n$ if the formal expansion of $x$ is finite,
\begin{itemize}
\item[($i$)] For $n\geq 1$,
$$
\frac{p_n}{q_n} = M_n\cdot 0.
$$

\item[($ii$)] For $n\geq 1$, and setting $p_0=p_0(x):=0$, $q_0=q_0(x):=1$,
$$
M_n = \begin{pmatrix}
p_{n-1} & p_n\\
q_{n-1} & q_n    
\end{pmatrix}.
$$
Furthermore,
$$
\text{\rm det}(M_n)=p_{n-1}q_n-p_nq_{n-1}=(-1)^{n-1}\rho_1\cdots\rho_{n-1}\in\{ \pm1\}, 
$$
and from this it follows that
$$
\text{\rm gcd}\{ p_n,q_n\} = \pm 1,\quad \text{for $n\geq 0$}.
$$

\item[($iii$)] For $n\geq 1$, and setting $p_{-1}=p_{-1}(x):=1$, $q_{-1}=q_{-1}(x):=0$, and
$$
M_0 = M_0(x) := \begin{pmatrix}
1 & 0\\
0 & 1    
\end{pmatrix},
$$
we have the recurrence relations
\begin{equation}\label{recurrencerelations}
\begin{array}{ccc}
p_{-1}=1; & p_0=0; & p_{n+1}=a_{n+1}p_n+\rho_np_{n-1},\quad n\geq 0;\\
q_{-1}=0; & q_0=1 & q_{n+1}=a_{n+1}q_n+\rho_nq_{n-1},\quad n\geq 0;
\end{array}
\end{equation}
\item[ ]
\end{itemize}
\end{lemma}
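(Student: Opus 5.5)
The natural approach is to prove ($ii$) and ($iii$) together by induction on $n$, and then derive ($i$) and the determinant formula from them. First I make the convention precise. Since $\gcd\{p_n,q_n\}$ is not known in advance, I \emph{define} $p_n,q_n$ as the entries of the second column of $M_n$, that is $(p_n,q_n)^T := M_n (0,1)^T$. I then show that they satisfy the recurrence~\eqref{recurrencerelations}, that $p_n/q_n$ equals the truncation~\eqref{NewCF3}, and that they are coprime; hence they agree with the reduced fraction up to sign. Because $a_1=2$, we get $q_1=2>0$, and since the digits $a_i=0$ occur only in the pattern forced by Theorem~\ref{thm:RomikCF}, I expect $q_n>0$. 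I would remark on this but not make it the core of the argument.

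The base case is $n=1$. Here $M_1=A_1=\begin{pmatrix}0&1\\1&a_1\end{pmatrix}$, using $\rho_0=1$. This equals $\begin{pmatrix}p_0&p_1\\q_0&q_1\end{pmatrix}$ with $p_0=0$, $q_0=1$, $p_1=1$, $q_1=a_1$. These values agree with~\eqref{recurrencerelations} for $n=0$: indeed $p_1=a_1\cdot 0+\rho_0\cdot 1=1$ and $q_1=a_1\cdot 1+\rho_0\cdot 0=a_1$. For the induction step, I assume $M_n=\begin{pmatrix}p_{n-1}&p_n\\q_{n-1}&q_n\end{pmatrix}$ and multiply on the right by $A_{n+1}=\begin{pmatrix}0&\rho_n\\1&a_{n+1}\end{pmatrix}$. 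This gives first column $(p_n,q_n)^T$ and second column $(\rho_np_{n-1}+a_{n+1}p_n,\ \rho_nq_{n-1}+a_{n+1}q_n)^T$, which is exactly~\eqref{recurrencerelations}. Hence ($ii$) and ($iii$) hold simultaneously. The case $M_0=I$ is consistent with $p_{-1}=1$, $q_{-1}=0$.

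The determinant follows from multiplicativity: $\det A_k=-\rho_{k-1}$, so $\det M_n=\prod_{k=1}^n(-\rho_{k-1})=(-1)^n\rho_0\rho_1\cdots\rho_{n-1}$. Comparing signs with the statement, $\det M_n=p_{n-1}q_n-p_nq_{n-1}$ should read $(-1)^n\rho_1\cdots\rho_{n-1}$; the two conventions differ only by the order of the terms in the determinant, so I will state it carefully. In any case the value lies in $\{\pm1\}$, and any common divisor of $p_n,q_n$ divides this determinant, so $\gcd\{p_n,q_n\}=1$.

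For ($i$), the column identity gives $M_n\cdot 0=p_n/q_n$ directly. It remains to see that this equals the finite continued fraction~\eqref{NewCF3}, and this is the main obstacle, since the $\rho$'s are shifted by one position. I would prove by induction on $n$ the stronger statement that for every real $t$ (allowing $\infty$)
$$
M_n\cdot t=[0;1/a_1,\rho_1/a_2,\dots,\rho_{n-1}/(a_n+\rho_n t)]\quad\text{(formal, with }\rho_n\text{ attached to the tail)}.
$$
Equivalently, the identity to check is $A_{n+1}\cdot t=\rho_n/(a_{n+1}+t)$ composed appropriately: $A_1\cdots A_n\cdot t$ is $\frac{1}{a_1+\frac{\rho_1}{\ddots\, a_n+t}}$. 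This holds since $A_k\cdot s=\rho_{k-1}/(a_k+s)$. Setting $t=0$ gives ($i$). Two points need care. The first is the case $a_k=0$ with the tail equal to $0$, where $1/0=\infty$ must be handled by the projective convention already adopted in Example~\ref{example1}. The second is checking that this composition agrees with the red-$0$ convention in~\eqref{collapsingfractions}.
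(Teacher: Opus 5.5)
Your argument is correct and is the standard induction on $M_{n+1}=M_nA_{n+1}$ that the paper alludes to and omits, with (i) coming from $A_k\cdot s=\rho_{k-1}/(a_k+s)$ (your first display should end in $a_n+t$ rather than $a_n+\rho_n t$, as your ``equivalently'' line shows); your sign correction is also right, since $\det M_1=\det A_1=-1$, so $p_{n-1}q_n-p_nq_{n-1}=(-1)^n\rho_1\cdots\rho_{n-1}$, and the stated $(-1)^{n-1}$ belongs to $p_nq_{n-1}-p_{n-1}q_n$. The one loose end is $q_n>0$, which you need to match the normalization in~\eqref{NewCF3} exactly rather than up to sign; it closes by the same induction: since $a_{n+1}=0$ forces $\rho_n=+1$ (so $q_{n+1}=q_{n-1}$) and $\rho_n=-1$ forces $a_n=2$, one proves simultaneously that $q_n\geq 1$ and that $a_n=2$ implies $q_n>q_{n-1}$.
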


\noindent As the \emph{proof} of this lemma is fairly straightforward and can be found in several papers (like~\cite{[K]}, p.~3, 4) and books~(\cite{[DK1],[IK]}), it is skipped.\smallskip\

Although Lemma~\ref{lemma:matrices1} is quite standard, due to the special nature of the Romik map $R$ remarkable things happen, which we already saw in Example~\ref{example2}: some Romik convergents are repeated! We have the following result.

\begin{proposition}\label{prop:repeatedconvergents}
Let $x\in [0,1]$, with formal Romik continued fraction expansion~\eqref{NewCF1}=\eqref{NewCF2} and convergents $\tfrac{p_n}{q_n}$, for $n\geq 1$ given by~\eqref{NewCF3}=\eqref{NewCF4}. Then 
$$
\text{$a_n=0$ implies that $\tfrac{p_n}{q_n}=\tfrac{p_{n-2}}{q_{n-2}}$ and\, $\tfrac{p_{n+1}}{q_{n+1}}=\tfrac{2p_{n-2}+p_{n-1}}{2q_{n-2}+q_{n-1}}$.}
$$
\end{proposition}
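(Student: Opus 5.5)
The plan is to read everything off the recurrence relations~\eqref{recurrencerelations} of Lemma~\ref{lemma:matrices1}, together with the structural constraints on the digits. By Theorem~\ref{thm:RomikCF} and the first line of~\eqref{RomikMapOnceUsedvariant3}, $a_n=0$ forces $a_{n-1}=2$ and $\rho_{n-1}=+1$. Note that $\rho_{n-1}$ is the numerator sitting directly above $a_n$: the collapsible fraction has the form $\frac{1}{2+\frac{1}{0+\cdots}}$. The same first line also tells us that the next digit is $a_{n+1}=2$ and that $\rho_n=+1$, since the digit after a $0$ is the $2$ in the expansion of $R^{n+1}$-type terms. I would first verify this indexing carefully against the definition~\eqref{NewCF1}, since the conventions for which $\rho$ accompanies which $a$ are the only real subtlety.

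For the first claim, apply~\eqref{recurrencerelations} with $a_n=0$ and $\rho_{n-1}=1$:
$$
p_n=0\cdot p_{n-1}+\rho_{n-1}p_{n-2}=p_{n-2},\qquad q_n=q_{n-2}.
$$
Hence $\tfrac{p_n}{q_n}=\tfrac{p_{n-2}}{q_{n-2}}$. The case $n=1$ cannot occur because $a_1=2$. For $n=2$ the identity reads $p_2/q_2=p_0/q_0=0$, which is consistent with Example~\ref{example2}.

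For the second claim, apply the recurrence once more with $a_{n+1}=2$ and $\rho_n=+1$, and substitute the relation just obtained:
$$
p_{n+1}=2p_n+p_{n-1}=2p_{n-2}+p_{n-1},\qquad q_{n+1}=2q_{n-2}+q_{n-1}.
$$
This gives the stated formula.

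The main obstacle is justifying $a_{n+1}=2$ and $\rho_n=+1$ after a zero digit. I expect it to follow from~\eqref{collapsingfractions}: the $0$ is always followed by the expansion $\frac{1}{2+\cdots}$ of $R^{n}$-iterate. Two further points need care. First, one should note that reducedness of $p_n/q_n$ is given by the determinant formula in Lemma~\ref{lemma:matrices1}(ii), so the equalities of fractions are literal equalities of the pairs $(p,q)$. Second, in the finite case one must check that a $0$ digit is never terminal. This holds because a terminal digit arises only when $R^k(x)=0$, and in that case the last branch used is not the left branch.
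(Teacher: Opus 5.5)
Your proposal is correct and is essentially the paper's own argument. The paper computes $M_n=M_{n-1}A_n$ with $A_n=\left(\begin{smallmatrix}0&1\\1&0\end{smallmatrix}\right)$ and then $M_{n+1}=M_nA_{n+1}$ with $A_{n+1}=\left(\begin{smallmatrix}0&1\\1&2\end{smallmatrix}\right)$, which is exactly your use of the recurrence relations~\eqref{recurrencerelations}, and it reads off $\rho_{n-1}=\rho_n=+1$ and $a_{n+1}=2$ from the left-branch formula just as you do (a tiny caveat on your side remark: a terminal $0$ digit does occur for $x=0$, which the left branch fixes, but there the second assertion is vacuous).
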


\begin{proof}
Although the proof is very easy, we give it for completeness. As $a_n=0$,~\eqref{RomikMapOnceUsedvariant2} yields that $\rho_{n-1}=+1$, and
$$
M_n=\begin{pmatrix}
p_{n-1} & p_n\\
q_{n-1} & q_n    
\end{pmatrix}
= \begin{pmatrix}
p_{n-2} & p_{n-1}\\
q_{n-2} & q_{n-1}
\end{pmatrix}
\begin{pmatrix}
0 & 1 \\
1 & 0
\end{pmatrix} 
=
\begin{pmatrix}
p_{n-1} & p_{n-2}\\
q_{n-1} & q_{n-2}
\end{pmatrix}.
$$
If $a_n=0$ we see from~\eqref{RomikMapOnceUsedvariant2} that $a_{n+1}=2$ and $\rho_n=+1$. So,
\begin{eqnarray*}
M_{n+1} &=& \begin{pmatrix}
p_n & p_{n+1}\\
q_n & q_{n+1}    
\end{pmatrix}
= \begin{pmatrix}
p_{n-2} & p_{n-1}\\
q_{n-2} & q_{n-1}
\end{pmatrix}
\begin{pmatrix}
0 & 1\\
1 & 0
\end{pmatrix}
\begin{pmatrix}
0 & 1\\
1 & 2
\end{pmatrix}\\
&=&
\begin{pmatrix}
p_{n-1} & p_{n-2}\\
q_{n-1} & q_{n-2}
\end{pmatrix}\begin{pmatrix}
0 & 1\\
1 & 2
\end{pmatrix}\\
&=&
\begin{pmatrix}
p_{n-2} & 2p_{n-2}+p_{n-1}\\
q_{n-2} & 2q_{n-2}+q_{n-1}
\end{pmatrix}.
\end{eqnarray*}
\end{proof}

\begin{example}\label{example3}{\rm  (continuation of Examples~\ref{ex:periodicexpansion},~\ref{example1} and~\ref{example2}). Let $x$ be as in Example~\ref{ex:periodicexpansion}, and recall that in~\eqref{NewCFexample} we saw that
$$
x = [0;(1/2, 1/0)^2, (1/2)^3, (-1/2)^2,  \dots ].
$$
Using the formal Romik-expansion~\eqref{NewCFexample} of $x$ yields the first four $M_n$-matrices:
$$
\begin{array}{ll}
M_1=A_1 = \begin{pmatrix}
0 & 1 \\
1 & 2
\end{pmatrix},
&
M_2 = \begin{pmatrix}
0 & 1 \\
1 & 2
\end{pmatrix} 
\begin{pmatrix}
0 & 1\\
1 & 0
\end{pmatrix}
= \begin{pmatrix}
1 & 0\\
2 & 1
\end{pmatrix},\\
M_3 = \begin{pmatrix}
1 & 0 \\
2 & 1
\end{pmatrix} 
\begin{pmatrix}
0 & 1\\
1 & 2
\end{pmatrix}
= \begin{pmatrix}
0 & 1\\
1 & 4
\end{pmatrix},
&
M_4 = \begin{pmatrix}
0 & 1 \\
1 & 4
\end{pmatrix} 
\begin{pmatrix}
0 & 1\\
1 & 0
\end{pmatrix}
= \begin{pmatrix}
1 & 0\\
4 & 1
\end{pmatrix},
\end{array}
$$
and in the same way the next five $M_n$-matrices of $x$ are:
$$
\begin{array}{lll}
M_5 = \begin{pmatrix}
0 & 1\\
1 & 6
\end{pmatrix},
&
M_6 = \begin{pmatrix}
1 & 2\\
6 & 13
\end{pmatrix},
&
M_7 = \begin{pmatrix}
2 & 5\\
13 & 32
\end{pmatrix},\\
M_8 = \begin{pmatrix}
5 & 8\\
32 & 51
\end{pmatrix},
&
M_9 = \begin{pmatrix}
8 & 11\\
51 & 70
\end{pmatrix},
&

\end{array}
$$
We find as the first 9 convergents:
$$
\begin{array}{lllll}
\tfrac{p_0}{q_0}= 0, &\tfrac{p_1}{q_1}=\tfrac{1}{2}, & \tfrac{p_2}{q_2}=0, & \tfrac{p_3}{q_3}=\tfrac{1}{4}, &  \tfrac{p_4}{q_4}=0, \\
\tfrac{p_5}{q_5}=\tfrac{1}{6}, & \tfrac{p_6}{q_6}=\tfrac{2}{13}, & \tfrac{p_7}{q_7}=\tfrac{5}{32}, & \tfrac{p_8}{q_8}=\tfrac{8}{51}, & \tfrac{p_9}{q_9}=\tfrac{11}{70}.
\end{array}
$$
see also Example~\ref{example2}.\hfill $\triangle$
}
\end{example}

\subsection{Convergence of the Romik continued fraction expansion}\label{subsec:convergence}
For the RCF and ECF (and many other continued fraction algorithms like Nakada's $\alpha$-expansions (\cite{[N]}), Bosma's Optimal Continued Fraction expansion~(\cite{[Bosma]}; see also~\cite{[K]}), results similar to the following results are quite straightforward and the basis of not only convergence of the continued fraction algorithm, but are also used to find strong Diophantine properties. For the Romik continued fraction, due to the special nature of the Romik map in $[0,\tfrac{1}{3}]$, a much weaker result is obtained.

\begin{proposition}\label{prop:xasfunctionofRn(x)}
Let $x\in [0,1]$, with formal Romik continued fraction expansion~\eqref{NewCF1} {\rm (}which is denoted by~\eqref{NewCF2}{\rm )}, where we\footnote{In case~\eqref{NewCF1} is finite one can make easily the obvious changes.} assume~\eqref{NewCF1} is infinite. Let $n\in\N$, and let $m=m(n)$ be defined by
$$
m=m(n):= \# \{ 1\leq i\leq n\, \big{|}\, a_i=0\} .
$$
Then, if $k=n-m$,
\begin{itemize}
\item[($i$)]
\begin{equation}\label{xasfunctionofconvergents}
x = \begin{cases}
\frac{\displaystyle p_n+\rho_np_{n-1}R^k(x)}{\displaystyle q_n+\rho_nq_{n-1}R^k(x)}, & \text{if $a_{n+1}=2$};\\
    &  \\
\frac{\displaystyle p_{n-1} + p_nR^k(x)}{\displaystyle q_{n-1}+q_nR^k(x)}, & \text{if $a_{n+1}=0$}.
\end{cases}  
\end{equation}

\item[($ii$)]
\begin{equation}\label{|x-pn/qn|}
\left| x - \frac{p_n}{q_n}\right| = \begin{cases}
\frac{\displaystyle R^k(x)}{\displaystyle q_n\left( q_n+\rho_nq_{n-1}R^k(x)\right)}, & \text{if $a_{n+1}=2$};\\
   & \\
\frac{\displaystyle 1}{\displaystyle q_n\left( q_{n-1}+q_n R^k(x)\right)}, & \text{if $a_{n+1}=0$}.
\end{cases}
\end{equation}
\item[]
\end{itemize}
\end{proposition}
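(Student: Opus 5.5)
The plan is to write $x$ as a M\"obius image of a ``tail'' under the matrix $M_n$ of Lemma~\ref{lemma:matrices1}, and to read off both formulas from that. For any $t$ we have $A_n\cdot t=\rho_{n-1}/(a_n+t)$. Since $M_n=A_1\cdots A_n$ and $(AB)\cdot t=A\cdot(B\cdot t)$, this gives
$$
M_n\cdot t=[0;1/a_1,\rho_1/a_2,\dots,\rho_{n-1}/(a_n+t)].
$$
By Lemma~\ref{lemma:matrices1}($ii$), $M_n\cdot t=\frac{p_{n-1}t+p_n}{q_{n-1}t+q_n}$. It therefore suffices to identify, in each of the two cases, the correct tail $t$ with $x=M_n\cdot t$.

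The key step is the index bookkeeping that links the digit position $n$ to the number $k$ of applications of $R$. By Theorem~\ref{thm:RomikCF} and \eqref{RomikMapOnceUsedvariant3}, each application of $R$ produces either a single digit $2$ (when $R^j(x)\in[\tfrac13,1]$) or the pair of digits $2,0$ (when $R^j(x)\in[0,\tfrac13)$). Hence after $N$ applications of $R$ we have written $N+\#\{\text{zeros}\}$ digits, and \eqref{RomikCF} holds exactly at the end of such a block.

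$\bullet$ Case $a_{n+1}=2$. Position $n$ is the last digit of a completed block: either $a_n=0$, or $a_n=2$ and $a_n$ does not open a $2,0$ pair. The number of completed steps is then $n-m=k$, so \eqref{RomikCF} with $N=k$ reads
$$
x=[0;1/a_1,\dots,\rho_{n-1}/(a_n+\rho_nR^k(x))].
$$
Thus $x=M_n\cdot(\rho_nR^k(x))$, which is the first line of \eqref{xasfunctionofconvergents}.

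$\bullet$ Case $a_{n+1}=0$. Here $a_n=2$ is the first digit of a pair $2,0$ coming from a step with $R^{k-1}(x)\in[0,\tfrac13)$; indeed, only $k-1=(n-1)-m$ steps are complete at position $n-1$. In this case $\rho_{n-1}=+1$, and \eqref{RomikMapOnceUsedvariant3} gives
$$
R^{k-1}(x)=\frac{1}{2+1/R^k(x)}.
$$
Applying \eqref{RomikCF} with $N=k-1$ and substituting this expression, we get $x=M_n\cdot(1/R^k(x))$. Clearing the fraction gives the second line of \eqref{xasfunctionofconvergents}.

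I expect this bookkeeping to be the main obstacle, in particular checking that $m$ counts exactly the right zeros in each case. I would verify it against Example~\ref{example3}, where $n=3,5$ have $a_{n+1}=0$ and $n=4,6$ have $a_{n+1}=2$.

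Part ($ii$) then follows by subtracting $p_n/q_n$ from the expression in ($i$) and using $|p_{n-1}q_n-p_nq_{n-1}|=1$ from Lemma~\ref{lemma:matrices1}($ii$). In the first case the numerator of $x-\frac{p_n}{q_n}$ is $\rho_nR^k(x)(p_{n-1}q_n-p_nq_{n-1})$, whose absolute value is $R^k(x)$. In the second case the numerator is $p_{n-1}q_n-p_nq_{n-1}=\pm1$.

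It remains to check that the denominators $q_n+\rho_nq_{n-1}R^k(x)$ and $q_{n-1}+q_nR^k(x)$ are positive, so that the absolute value falls only on the numerator. For the second this is immediate, since $q_i\ge0$. For the first I would use $q_n=a_nq_{n-1}+\rho_{n-1}q_{n-2}$ from \eqref{recurrencerelations}, together with $R^k(x)\le1$, to argue $q_n\ge q_{n-1}$ whenever $\rho_n=-1$. If this is delicate, it suffices to note that the expression in ($i$) equals $x$, which is finite, so the denominator is nonzero, and positivity then follows by continuity in $R^k(x)\in[0,1]$ from the value $q_n>0$ at $R^k(x)=0$.
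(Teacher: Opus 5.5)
Your proposal is correct and is essentially the paper's proof. The paper likewise writes $x=M_{n-1}A_n^*\cdot 0=M_n\cdot t$ with tail $t=\rho_nR^k(x)$ or $t=1/R^k(x)$, obtains (ii) by subtracting $p_n/q_n$ and using $\det M_n=\pm1$, and gets positivity of $q_n+\rho_nq_{n-1}R^k(x)$ from $q_n>q_{n-1}$ when $a_n=2$. Your block count via Theorem~\ref{thm:RomikCF} usefully justifies the choice $k=n-m$, which the paper takes for granted.

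There are a few minor slips, none of which affects the argument:
when $a_{n+1}=0$ it is $\rho_n$, not $\rho_{n-1}$, that is forced to be $+1$. For $x\in(\tfrac12,\tfrac35)$ and $n=2$ one has $\rho_1=-1$, $a_2=2$, $a_3=0$, but your derivation simply carries $\rho_{n-1}$ along inside $M_n$, so nothing breaks.
In Example~\ref{example3} the indices with $a_{n+1}=0$ are $n=1,3$, not $3,5$.
Your continuity fallback would need $q_n+\rho_nq_{n-1}s\neq 0$ for all $s\in[0,R^k(x)]$, not just at $s=R^k(x)$. It is cleaner to prove $q_n>q_{n-1}$ for $a_n=2$ by induction on \eqref{recurrencerelations}, or to combine $q_n\ge q_{n-1}$ and $R^k(x)\le 1$ with the nonvanishing you observed.
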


\begin{proof}
($i$) Let us first assume $a_{n+1}=2$. Then
$$
x = [0; 1/a_1,\rho_1/a_2,\dots, \rho_{n-1}/(a_n+\rho_nR^k(x))],
$$
and setting
$$
A_n^* = \begin{pmatrix}
0 & \rho_{n-1}\\
1 & a_n+\rho_nR^k(x)    
\end{pmatrix},
$$
one easily finds that
$$
x = M_{n-1}A_n^* \cdot 0.
$$

Now,
\begin{eqnarray*}
M_{n-1}\begin{pmatrix}
0 & \rho_{n-1}\\
1 & a_n+\rho_nR^k(x)    
\end{pmatrix} &=& \begin{pmatrix}
p_{n-2} & p_{n-1}\\
q_{n-2} & q_{n-1}
\end{pmatrix}\begin{pmatrix}
0 & \rho_{n-1}\\
1 & a_n+\rho_nR^k(x)    
\end{pmatrix} \\
&=& \begin{pmatrix}
p_{n-1} & a_np_{n-1} + \rho_{n-1}p_{n-2} + \rho_np_{n-1}R^k(x)\\
q_{n-1} & a_nq_{n-1} + \rho_{n-1}q_{n-2} + \rho_nq_{n-1}R^k(x)
\end{pmatrix}\\
&=& \begin{pmatrix}
p_{n-1} & p_n+\rho_{n}p_{n-1}R^k(x)\\
q_{n-1} & q_n+\rho_{n}q_{n-1}R^k(x)
\end{pmatrix};
\end{eqnarray*}
combining these last two results yields
$$
x = \frac{\displaystyle p_n+\rho_{n}p_{n-1}R^k(x)}{\displaystyle q_n+\rho_{n}q_{n-1}R^k(x)}.
$$\medskip

Next, assume that $a_{n+1}\neq 2$, i.e., $a_{n+1}=0$. Then
$$
x = [0; 1/a_1,\rho_1/a_2,\dots, \rho_{n-1}/a_n, 1/(0+R^k(x))],
$$
and setting
$$
A_n^{\sharp} = \begin{pmatrix}
0 & \rho_{n-1}\\
1 & a_n+\frac{1}{R^k(x)}    
\end{pmatrix},
$$
one finds
\begin{eqnarray*}
x &=& M_{n-1}A_n^{\sharp}\cdot 0\\
&=& \begin{pmatrix}
p_{n-2} & p_{n-1}\\
q_{n-2} & q_{n-1}
\end{pmatrix}\begin{pmatrix}
0 & \rho_{n-1}\\
1 & a_n+\frac{1}{R^k(x)}    
\end{pmatrix}\cdot 0\\
&=& \begin{pmatrix}
p_{n-1} & a_np_{n-1} + \rho_{n-1}p_{n-2} + \frac{p_{n-1}}{R^k(x)}\\
q_{n-1} & a_nq_{n-1} + \rho_{n-1}q_{n-2} + \frac{q_{n-1}}{R^k(x)}
\end{pmatrix}\cdot 0 \\
&=& \begin{pmatrix}
p_{n-1} & p_n+\frac{p_{n-1}}{R^k(x)}\\
q_{n-1} & q_n+\frac{q_{n-1}}{R^k(x)}
\end{pmatrix}\cdot 0\\
&=& \frac{\displaystyle p_{n-1}+ p_nR^k(x)}{\displaystyle q_{n-1}+q_nR^k(x)}.\\
& &
\end{eqnarray*}

($ii$) First assume that $a_{n+1}=2$. Then from ($i$) we find
\begin{eqnarray*}
\left| x - \frac{p_n}{q_n}\right| &=& \left| \frac{\displaystyle p_n+\rho_np_{n-1}R^k(x)}{\displaystyle q_n+\rho_nq_{n-1}R^k(x)} - \frac{p_n}{q_n}\right| \\
&=& \frac{R^k(x)}{q_n \big| q_n+\rho_nq_{n-1}R^k(x)\big|} .
\end{eqnarray*}
Now either $a_n=0$ or $a_n=2$. In the former case $\rho_n=+1$, so $q_n+\rho_nq_{n-1}R^k(x) > 0$. In the latter case $\rho_n=-1$ is possible, but since in this case $q_n>q_{n-1}$ and $R^k(x)\in [0,1]$, we again find that $q_n+\rho_nq_{n-1}R^k(x) > 0$.\medskip\

Next, let $a_{n+1}=0$, implying that $a_n=2$ and $\rho_n=+1$. Then from ($i$) we find
\begin{eqnarray*}
\left| x - \frac{p_n}{q_n}\right| &=& \left| \frac{\displaystyle p_{n-1}+ p_nR^k(x)}{\displaystyle q_{n-1}+q_nR^k(x)} - \frac{p_n}{q_n}\right| \\
&=& \frac{1}{q_n \big( q_{n-1}+q_{n}R^k(x)\big)} .
\end{eqnarray*}  
\end{proof}

In Example~\ref{example3} we saw, that for $x=g/3.938=0.156941084\dots$,
$$
\begin{array}{lllll}
q_0=1, & q_1=2, & q_2=1, & q_3=4, & q_4=1,\\
q_5=6, & q_6=13, & q_7=32, & q_8=51, & q_9=70
\end{array}
$$
So we see from this example that the sequence $(q_n)_{n\geq 0}$ is \emph{not} monotonically increasing (as is the case for many other continued fraction algorithms, such as the RCF). In fact, repetitions of convergents occur exactly when $R^i(x)\in (0,\tfrac{1}{3})$: in these cases, there is a stretch where every other convergent is the same, due to the following observation. Let $x\in (0,1)\setminus\Q$ be such that $a_n=2$ for some $n\in\N$ and $a_n=0$ when $n=0$, and 
$a_{n+2k+1}=2$, $a_{n+2k+2}=0$, for $0\leq k\leq N$, where $N\in\N$ and $a_{n+2N+4}=2$ (such an $N$ must exist as $x$ is irrational; we cannot have that $R^m(x)\in [0,\tfrac{1}{3})$ for all $m$ sufficiently large as this would imply that $x\in\Q$). Setting
$$
A := \begin{pmatrix}
0 & 1\\
1 & 2
\end{pmatrix} \begin{pmatrix}
0 & 1\\
1 & 0
\end{pmatrix} = \begin{pmatrix}
1 & 0\\
2 & 1
\end{pmatrix},
$$
we see using induction that
$$
A^k = \begin{pmatrix}
1 & 0\\
2k & 1
\end{pmatrix},\quad \text{for $k\in\N\cup\{ 0\}$},
$$
and from this it follows that
$$
A^k\begin{pmatrix}
0 & 1\\
1 & 2    
\end{pmatrix} = \begin{pmatrix}
0 & 1\\
1 & 2(k+1)
\end{pmatrix}.
$$
But then we find for $0\leq k\leq N$,
\begin{eqnarray*}
M_{n+2k+1} &=& M_nA^k\begin{pmatrix}
0 & 1\\
1 & 2    
\end{pmatrix} \,\, = \,\, \begin{pmatrix}
p_{n-1} & p_n\\
q_{n-1} & q_n
\end{pmatrix} \begin{pmatrix}
0 & 1\\
1 & 2(k+1)
\end{pmatrix}\\
&=& \begin{pmatrix}
p_n & 2(k+1)p_n+p_{n-1}\\
q_n & 2(k+1)q_n+q_{n-1}    
\end{pmatrix},
\end{eqnarray*}
and
\begin{eqnarray*}
M_{n+2k+2} &=& M_nA^{k+1} \,\, = \,\, \begin{pmatrix}
p_{n-1} & p_n\\
q_{n-1} & q_n
\end{pmatrix} \begin{pmatrix}
1 & 0\\
2(k+1) & 1
\end{pmatrix}\\
&=& \begin{pmatrix}
2(k+1)p_n+p_{n-1} & p_n\\
2(k+1)q_n+q_{n-1} & q_n   
\end{pmatrix}.
\end{eqnarray*}
Thus we see $N$-times the same convergent $\tfrac{p_n}{q_n}$, while meanwhile the denominators of the other convergents $\tfrac{2(k+1)p_n+p_{n-1}}{2(k+1)q_n+q_{n-1}}$, for $k=0,1,\dots,N$, are monotonically increasing. As we must have $a_{n+2N+3}=2$, and since we choose $N\in\N$ such that $a_{n+2N+4}=2$, we see that
$$
M_{n+2N+3} = \begin{pmatrix}
2(k+1)p_n+p_{n-1} & p_n\\
2(k+1)q_n+q_{n-1} & q_n   
\end{pmatrix} \begin{pmatrix}
0 & 1\\
1 & 2
\end{pmatrix} = \begin{pmatrix}
p_n & 2(k+2)p_n+p_{n-1}\\
q_n & 2(k+2)q_n+q_{n-1}
\end{pmatrix}
$$
and
\begin{eqnarray*}
M_{n+2N+4} &=& \begin{pmatrix}
p_n & 2(k+2)p_n+p_{n-1}\\
q_n & 2(k+2)q_n+q_{n-1}
\end{pmatrix}\begin{pmatrix}
0 & \rho_{n+2N+3}\\
1 & 2
\end{pmatrix} \\
&=& \begin{pmatrix}
2(k+2)p_n+p_{n-1} & (4k+9)p_n+(1+\rho_{n+2N+3})p_{n-1}\\
2(k+2)q_n+q_{n-1} & (4k+9)q_n+(1+\rho_{n+2N+3})q_{n-1}
\end{pmatrix}.
\end{eqnarray*}
Clearly, the following holds.
\begin{lemma}\label{growthofdenominatorsofqn}
Let $x\in [0,1]\setminus\Q$, with formal Romik continued fraction expansion~\eqref{NewCF1}=\eqref{NewCF2}, and convergents $\tfrac{p_n}{q_n}$, for $n\geq 1$ given by~\eqref{NewCF3}=\eqref{NewCF4}. Then for every bound $B>0$ there exists an index $N\in\N$, such that $q_n>B$ for all $n\geq N$.
\end{lemma}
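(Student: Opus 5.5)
The plan is to reduce to the ``collapsed'' expansion of~\eqref{aftercollapsingfractions}, in which all partial quotients are even and at least $2$, and then to show that the denominators of that expansion grow at least linearly.

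First I will group the Romik digits into blocks. Each block is either a single digit $\rho_{i-1}/2$ followed by $\rho_i/2$, or a maximal run $(1/2,1/0)^{k}$ followed by $1/2$. Let $n_1<n_2<\cdots$ be the Romik indices at which a block ends. These are exactly the indices $n$ with $a_n=2$ and $a_{n+1}=2$; at such an index the matrix $M_n$ has already absorbed the collapse, so $\tfrac{p_n}{q_n}$ is a convergent of the collapsed expansion $[0;1/b_1,\epsilon_1/b_2,\dots]$ with $b_j\in 2\N$. The computation of $M_{n+2k+1}$ and $M_{n+2k+2}$ preceding the lemma is precisely the statement that a run $(1/2,1/0)^k,1/2$ multiplies $M_n$ by $\begin{pmatrix}0&1\\1&2(k+1)\end{pmatrix}$. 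Hence, writing $Q_j:=q_{n_j}$ and $Q_0:=q_0=1$, $Q_{-1}:=q_{-1}=0$, we get the recurrence
$$Q_{j+1}=b_{j+1}Q_j+\epsilon_jQ_{j-1},\qquad b_{j+1}\geq 2,\ \epsilon_j\in\{\pm1\}.$$
Because $x$ is irrational, every run of zeros is finite, as observed before the lemma, so there are infinitely many blocks and $j$ runs through all of $\N$.

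Second, I will show by induction that $Q_{j+1}-Q_j\geq Q_j-Q_{j-1}\geq 1$. This follows from $Q_{j+1}\geq 2Q_j-Q_{j-1}$ together with the base case $Q_0-Q_{-1}=1$. Consequently $Q_j\geq j+1$ and $Q_j$ is strictly increasing.

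Third, I will control the Romik denominators that lie inside a block. For $n_j<n\leq n_{j+1}$ the displayed formulas for $M_{n_j+2k+1}$ and $M_{n_j+2k+2}$ show that $q_n$ is either $Q_j$ itself (the repeated convergent) or of the form $2(k+1)Q_j+q_{n_j-1}\geq Q_j$. In the case of a single non-collapsing digit, $q_n=Q_{j+1}$. So $q_n\geq Q_j$ for every $n>n_j$. Given $B$, I then choose $j$ with $Q_j>B$, which is possible since $Q_j\geq j+1$, and take $N=n_j+1$.

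The main obstacle is bookkeeping rather than estimation. I have to check that the block-end indices really reproduce the collapsed recurrence with the correct signs $\epsilon_j$, in particular that the sign $\rho$ attached to the digit after a run is exactly the one carried into $M_{n+2N+4}$. I also have to check that $q_{n-1}\geq 0$ throughout, so that the in-block denominators are never smaller than $Q_j$. I would verify both directly from the matrix identity $A^k\begin{pmatrix}0&1\\1&2\end{pmatrix}=\begin{pmatrix}0&1\\1&2(k+1)\end{pmatrix}$, using the nonnegativity of all $q_n$, which follows inductively from the same monotonicity argument.
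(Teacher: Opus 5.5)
Your proposal is correct and follows the paper's own route. The paper's justification is just the block computation $A^k\begin{pmatrix}0&1\\1&2\end{pmatrix}=\begin{pmatrix}0&1\\1&2(k+1)\end{pmatrix}$ followed by ``clearly''; your collapsed recurrence $Q_{j+1}=b_{j+1}Q_j+\epsilon_jQ_{j-1}$, with the induction $Q_{j+1}-Q_j\geq Q_j-Q_{j-1}\geq 1$, supplies the growth estimate the paper leaves implicit, and this estimate is really needed because long runs of digits $-1/2$ give only linear growth.

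One fix is needed in your third step. If the sign carried into a block is $\epsilon_j=-1$, the in-block denominators are $2(i+1)Q_j-Q_{j-1}$, not $2(i+1)Q_j+q_{n_j-1}$; for example $q_9=2\cdot 51-32=70$ in Example~\ref{example3}. The paper's display $M_{n+2k+1}=M_nA^k\begin{pmatrix}0&1\\1&2\end{pmatrix}$ tacitly assumes $\rho_n=+1$ as well. So the bound $q_n\geq Q_j$ must come from $2Q_j-Q_{j-1}\geq Q_j$, i.e.\ from your monotonicity $Q_j\geq Q_{j-1}$, rather than from $q_{n_j-1}\geq 0$.
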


\section{An algorithm to convert the RCF-expansion of any \emph{x} to its Romik-expansion (and a metric result)}\label{sec:AnConversionAlgorithm}
As we have already remarked several times, in~\cite{[KL]} an algorithm is given to convert the RCF-expansion of any $x\in\R$ to its ECF-expansion. This algorithm is based on \emph{insertions} and \emph{singularizations} (cf.~\eqref{insertion} resp.~\eqref{singularization}). Below we will give a similar algorithm to convert the RCF-expansion of any $x\in\R$ to the Romik-expansion of $x$.

\subsection{A conversion algorithm}\label{subsec:conversion}
Apart from the insertion from~\eqref{insertion}, and the singularization from~\eqref{singularization}, we will also need the following \emph{strange insertion}; let $a\in\N$, $a\geq 3$, and let $\xi\in [0,1)$, then
\begin{equation}\label{strangeinsertion}
\frac{1}{a+\xi} = \frac{1}{2+\displaystyle \frac{1}{0 + \displaystyle \frac{1}{a-2 + \xi}}}.
\end{equation}
In~\eqref{strangeinsertion} we have inserted 
$$
\frac{1}{2+\displaystyle \frac{1}{0\,\, + }}
$$
before the partial quotient $a$ (which becomes $a-2$). The effect of this insertion is the appearance of a mediant convergent and the repetition of an earlier convergent.

\begin{lemma}\label{lemma:strangeinsertion}
Let $x\in\R$, with continued faction expansion 
\begin{equation}\label{SomeExpansionOfX}
x=d_0+\frac{c_1}{d_1 +\displaystyle{\frac{c_2}{d_2+\ddots }}} = [d_0;c_1/d_1,c_2/d_2,\dots],
\end{equation}
with $d_0\in\Z$ such that $x-d_0\in [0,1)$, $d_n\in\N$ and $c_n\in\{ \pm 1\}$, for $n\in\N$, and with convergents $(\tfrac{r_n}{s_n})_{n\geq 0}$ converging to $x$. Furthermore, let for some $n\in\N$: $c_n=1$, $d_n\geq 3$. Then applying the ``strange insertion''~\eqref{strangeinsertion} to $c_n=1$, $d_n\geq 3$ yields a new continued fraction expansion of $x$ of the form
$$
x=[d_0;c_1/d_1,\dots, c_{n-1}/d_{n-1},1/2,1/0,1/(d_n-2),c_{n+1}/d_{n+1},\dots ],
$$
and where two new convergents {\rm (}viz.\ $\tfrac{2r_{n-1}+r_{n-2}}{2s_{n-1}+s_{n-1}}$ and $\tfrac{r_{n-1}}{s_{n-1}}${\rm )} are inserted between $\tfrac{r_{n-1}}{s_{n-1}}$ and $\tfrac{r_n}{s_n}$ {\rm (}so the convergent $\tfrac{r_{n-1}}{s_{n-1}}$ is repeated{\rm )}.
\end{lemma}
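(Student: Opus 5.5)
We prove the lemma in two parts, both of which follow the matrix formalism of Lemma~\ref{lemma:matrices1}. First we check that the new expansion represents $x$. Then we compute the new convergents.

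For the first part, write the tail of~\eqref{SomeExpansionOfX} starting at position $n$ as $d_n+\xi$, where $\xi=\frac{c_{n+1}}{d_{n+1}+\cdots}$. Then $x=[d_0;c_1/d_1,\dots,c_{n-1}/d_{n-1},1/(d_n+\xi)]$, using $c_n=1$. The identity~\eqref{strangeinsertion} is the elementary computation
$$
\frac{1}{2+\frac{1}{0+\frac{1}{d_n-2+\xi}}}=\frac{1}{2+(d_n-2+\xi)}=\frac{1}{d_n+\xi}.
$$
It holds for every real $\xi$ with $d_n-2+\xi\neq 0$. This condition is guaranteed because $d_n\ge 3$, so $d_n-2+\xi\geq 1+\xi>0$ whenever $|\xi|<1$. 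If $c_{n+1}=-1$, the bound on $|\xi|$ would have to come from the admissibility conditions of the original expansion (which make $|\xi|\le 1$); we will note this caveat explicitly.

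Substituting the identity into the $n$-th level gives the stated expansion, with partial quotients $\dots,c_{n-1}/d_{n-1},1/2,1/0,1/(d_n-2),c_{n+1}/d_{n+1},\dots$. The tail from position $n+1$ on is untouched.

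For the convergents, we use the matrix formalism analogous to~\eqref{AnMnmMatrices}. The convergents of the original expansion come from products whose last columns are $(r_k,s_k)^T$, and the matrix up to index $n-1$ is
$$
\begin{pmatrix} r_{n-2} & r_{n-1}\\ s_{n-2} & s_{n-1}\end{pmatrix}.
$$
Multiplying successively by $\begin{pmatrix}0&1\\1&2\end{pmatrix}$, then $\begin{pmatrix}0&1\\1&0\end{pmatrix}$, then $\begin{pmatrix}0&1\\1&d_n-2\end{pmatrix}$ gives three matrices, exactly as in the proof of Proposition~\ref{prop:repeatedconvergents}.
\begin{itemize}
\item[(a)] The first product has last column $(2r_{n-1}+r_{n-2},\,2s_{n-1}+s_{n-2})^T$. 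This is the mediant-type convergent.
\item[(b)] The second product equals $\begin{pmatrix} r_{n-1}&2r_{n-1}+r_{n-2}\\ s_{n-1}&2s_{n-1}+s_{n-2}\end{pmatrix}$ with its columns swapped. Its last column is therefore $(r_{n-1},s_{n-1})^T$, the repeated convergent.
\item[(c)] The third product equals
$$
\begin{pmatrix} 2r_{n-1}+r_{n-2} & r_{n-1}+(d_n-2)(2r_{n-1}+r_{n-2})\end{pmatrix}
$$
in its top row. Equivalently, it is the original matrix times $A^1\begin{pmatrix}0&1\\1&d_n-2\end{pmatrix}=\begin{pmatrix}0&1\\1&d_n\end{pmatrix}$ (using $A^k$ from the discussion before Lemma~\ref{growthofdenominatorsofqn}), whose last column is $(d_nr_{n-1}+r_{n-2},\dots)^T=(r_n,s_n)^T$.
\end{itemize}
The cleanest way to organize (c) is the factorization
$$
\begin{pmatrix}0&1\\1&2\end{pmatrix}\begin{pmatrix}0&1\\1&0\end{pmatrix}\begin{pmatrix}0&1\\1&d_n-2\end{pmatrix}=\begin{pmatrix}0&1\\1&d_n\end{pmatrix}.
$$
It shows that after these three steps the product matrix coincides with the original one at index $n$. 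All later convergents are therefore unchanged, since the subsequent factors (involving $c_{n+1}$ etc.) are the same. This also confirms that the new expansion converges to $x$.

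The main obstacle is bookkeeping rather than mathematics. The convention in~\eqref{AnMnmMatrices} puts $\rho_{n-1}$ (here $c_n$) in the $n$-th matrix, and we use $c_n=1$ there. The statement's denominator "$2s_{n-1}+s_{n-1}$" is presumably a typo for $2s_{n-1}+s_{n-2}$, and we would prove the corrected form.
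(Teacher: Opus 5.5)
Your argument is essentially the paper's proof. Both start from the matrix $\begin{pmatrix} r_{n-2} & r_{n-1}\\ s_{n-2} & s_{n-1}\end{pmatrix}$, multiply successively by $\begin{pmatrix}0&1\\1&2\end{pmatrix}$, $\begin{pmatrix}0&1\\1&0\end{pmatrix}$ and $\begin{pmatrix}0&1\\1&d_n-2\end{pmatrix}$, read off the last columns, and observe that the final product is the original $M_n$. Your factorization $A\begin{pmatrix}0&1\\1&d_n-2\end{pmatrix}=\begin{pmatrix}0&1\\1&d_n\end{pmatrix}$ is a tidy way to package the last step, and it alone proves the claim. Your direct check of the identity and the correction of the typo to $2s_{n-1}+s_{n-2}$ are fine.

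There is one slip to fix. The top row you display in (c) is wrong: you multiplied the unswapped matrix from (a) instead of the matrix obtained in (b). Since
\[
\begin{pmatrix}a&b\\c&d\end{pmatrix}\begin{pmatrix}0&1\\1&e\end{pmatrix}=\begin{pmatrix}b&a+be\\d&c+de\end{pmatrix},
\]
the correct top row is $\bigl(r_{n-1},\ 2r_{n-1}+r_{n-2}+(d_n-2)r_{n-1}\bigr)=\bigl(r_{n-1},\ d_nr_{n-1}+r_{n-2}\bigr)$. This agrees with your factorization, so your word ``Equivalently'' is currently false. Correct or delete that display.
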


\begin{proof}
Similar to~\eqref{AnMnmMatrices}, define matrices $A_n=A_n(x)$ and $M_n=M_n(x)$ as
$$
A_0=A_0(x) = \begin{pmatrix}
1 & a_0\\
0 & 1
\end{pmatrix},\quad
A_n=A_n(x) = \begin{pmatrix}
0 & c_n\\
1 & d_n
\end{pmatrix},
$$
and $M_n=M_n(x) = A_0A_1A_2\cdots A_n$, for $n\in\N$. Then for $n\geq 0$,
$$
M_n = \begin{pmatrix}
r_{n-1} & r_n\\
s_{n-1} & s_n
\end{pmatrix},
$$
where $\tfrac{r_n}{s_n}$ is the $n$th convergent in the continued fraction expansion~\eqref{SomeExpansionOfX} of $x$ (note that $r_n$ and $s_n$ are not necessarily relative prime, as it is not guaranteed that the determinant of $M_n$ is $\pm 1$). So we have (and the successive convergents are in {\color{red}red}) for $n$
$$
M_n = \begin{pmatrix}
r_{n-2} & {\color{red}r_{n-1}} \\
s_{n-2} & {\color{red}s_{n-1}}
\end{pmatrix}
\begin{pmatrix}
0 & 1 \\
1 & d_n
\end{pmatrix} = \begin{pmatrix}
r_{n-1} & {\color{red}d_nr_{n-1}+r_{-2}}\\
s_{n-1} & {\color{red}d_ns_{n-1}+s_{-2}}
\end{pmatrix},
$$
yielding the usual recurrence relations for the $r_n$ and $s_n$. 

Now,
\begin{eqnarray*}
&& \begin{pmatrix} 
r_{n-2} &  {\color{red} r_{n-1}}\\
s_{n-2} &  {\color{red} s_{n-1}}
\end{pmatrix} \begin{pmatrix} 
0 & 1\\
1 & 2
\end{pmatrix} \begin{pmatrix} 
0 & 1\\
1 & 0
\end{pmatrix} \begin{pmatrix} 
0 & 1 \\
1 & d_n-2
\end{pmatrix} \\
&=& \begin{pmatrix} 
r_{n-1} & {\color{red} 2r_{n-2}+r_{n-2}} \\
s_{n-1} & {\color{red} 2s_{n-1}+s_{n-2}}
\end{pmatrix} \begin{pmatrix} 
0 & 1\\
1 & 0
\end{pmatrix} \begin{pmatrix} 
0 & 1 \\
1 & d_n-2
\end{pmatrix} \\
&=& \begin{pmatrix} 
2r_{n-2}+r_{n-2} & {\color{red} r_{n-1}} \\
2s_{n-1}+s_{n-2} & {\color{red} s_{n-1}}
\end{pmatrix} \begin{pmatrix} 
0 & 1 \\
1 & d_n-2
\end{pmatrix} \\
&=& \begin{pmatrix} 
r_{n-1} & {\color{red} 2r_{n-2}+r_{n-2} + (d_n-2)r_{n-1}}  \\
s_{n-1} & {\color{red} 2s_{n-1}+s_{n-2} + (d_n-2)s_{n-1}}
\end{pmatrix} \\
&=& \begin{pmatrix} 
r_{n-1} & d_nr_{n-1}+r_{n-2}  \\
s_{n-1} & d_ns_{n-1}+s_{n-2}
\end{pmatrix},
\end{eqnarray*}
and the last matrix is indeed $M_n$, due to the well-known recurrence relations (recall that here $c_n=1$).
\end{proof}

On $[\tfrac{1}{2},1]$, the Romik map $R$ is in fact the ``flipped'' RCF-map. Say the RCF-expansion of some $x\in (\tfrac{1}{2},1)$ is $x=[0;1,a,b,c,\dots]$, with $a,b,c\in\N$. We have two cases.

If $a=1$, we know from Proposition~\ref{prop:relationRomikGauss} that $R(x)=[0;b+1,c,\dots]$, and the reason why this is so is explained in Section~2.2.1 on pages 57 and 58 of \cite{[DHKM]}; we \emph{singularized} the regular partial quotient {\color{red}$a=1$}, to find the first RCF-convergent of $x$ (here $\tfrac{1}{1}$) deleted; see also~\eqref{singularization},
$$
\frac{1}{1 + \displaystyle \frac{1}{{\color{red}1} +  \displaystyle  \frac{1}{b +  \displaystyle  \frac{1}{c + \ddots}}}} =
\frac{1}{{\color{red}2} +  \displaystyle \frac{\color{red}-1}{{\color{red}b+1} + \displaystyle \frac{1}{c + \ddots}}};
$$

If $a>1$, we know from Proposition~\ref{prop:relationRomikGauss} that $R(x)=[0;1,a-1,b,c,\dots]$, and the reason why this is so is explained in Section~2.2.2 on pages 58 and 59 of \cite{[DHKM]}; we \emph{inserted} {\color{red}$-1/1$} before the partial quotient $a\geq 2$ to obtain an extra (mediant) convergent,
$$
\frac{1}{1 + \displaystyle \frac{1}{a +  \displaystyle  \frac{1}{b +  \displaystyle  \frac{1}{c + \ddots}}}} =
\frac{1}{2 +  \displaystyle \frac{\color{red}-1}{{\color{red}1} +  \displaystyle \frac{1}{a-1 +  \displaystyle \frac{1}{b +  \displaystyle \frac{1}{c + \ddots}}}}};
$$
see also~\eqref{insertion}.\smallskip\

As in the proof of Lemma~\ref{lemma:strangeinsertion} both cases can easily be seen using suitable matrix products. We omit these here. 

The following algorithm describes how we can convert the RCF-expansion of any $x\in\R$ to the Romik expansion of $x$. It is reminiscent of the algorithm from~\cite{[KL]}, pp.~299--300, to convert an RCF-expansion of $x$ into an ECF-expansion of $x$. Let us first introduce some notation. Let $x\in\R$ have a continued fraction expansion as in~\eqref{SomeExpansionOfX}, so $x=[d_0;c_1/d_1,c_2/d_2,\dots]$.
\begin{itemize}
\item[($i$)] If $c_n=+1$, $d_n>3$, then  
$$
\pi_n([d_0;c_1/d_1,c_2/d_2,\dots,c_{n-1}/d_{n-1},1/d_n,\dots]) {\phantom{XXXXXXX}}
$$
$$
= [d_0;c_1/d_1,c_2/d_2,\dots,c_{n-1}/d_{n-1},1/2,1/0,1/(d_n-2),\dots],
$$
i.e., we have a ``strange insertion'' ${\displaystyle \frac{1}{2+\displaystyle \frac{1}{0\,\, + }}}$ before $d_n$; see also~\eqref{strangeinsertion}.

\item[ ]

\item[($ii$)] If $d_n=1$, $c_{n+1}=+1$, $d_{n+1}=1$, $c_{n+2}=+1$, then  
$$
\sigma_n([d_0;c_1/d_1,c_2/d_2,\dots,c_n/1,1/1, 1/d_{n+2},\dots]) {\phantom{XXXXXXX}}
$$
$$
= [d_0;c_1/d_1,c_2/d_2,\dots,c_n/2,-1/(d_{n+2}+1),\dots],
$$
i.e., we have singularized $d_{n+1}=1$.

\item[ ]

\item[($iii$)] If $d_n=1$, $c_{n+1}=+1$, $d_{n+1}>1$, then  
$$
\iota_n([d_0;c_1/d_1,c_2/d_2,\dots,c_n/1,1/d_{n+1},\dots]) {\phantom{XXXXXXXXX}}
$$
$$
= [d_0;c_1/d_1,c_2/d_2,\dots,c_n/2,-1/1,1/(d_{n+1}-1),\dots],
$$
i.e., we inserted $-1/1$ before $d_{n+1}$.

\item[ ]
\end{itemize}

\begin{algorithm}\label{convertionRCFtoRomikExpansion}{\rm
Let $x\in\R$ have RCF-expansion $x=[a_0;a_1,a_2,\dots]$. In order to convert the RCF-expansion of $x$ we have two cases:\medskip\

\textbf{(I)} Let $n\geq 1$ the first index in the RCF-expansion of $x$ for which $a_n\neq 2$. If no such index exists, then $x=a_0-1+\sqrt{2}$, and we are done. I.e., $n=\inf \{ n\in\N\, |\,a_n=2\}$, where $n=\infty$ when $\{ n\in\N\, |\,a_n=2\}=\emptyset$. we consider three cases.

\begin{itemize}\item[($i$)] The case $a_n\geq 3$. In this case we have two sub-cases. If $a_n$ is \emph{even}, apply the ``strange insertion'' $\tfrac{a_n-2}{2}$-times ``before'' $a_n$. I.e., replace $x=[a_0;a_1,\dots,a_{n-1},a_n, a_{n+1},\dots]$ by
$$
\pi_{n+\tfrac{a_n-2}{2}}\left( \cdots (\pi_{n+2}(\pi_n([a_0;a_1,\dots,a_{n-1},a_n, a_{n+1},\dots]))) \cdots \right),
$$
which is equal to
\begin{equation}\label{conversionRCFtoRomik1(i)even}
x=[a_0;1/a_1,\dots,1/a_{n-1},(1/2,1/0)^{\tfrac{a_n-2}{2}},1/2,1/a_{n+1},\dots].
\end{equation}
Denote the continued fraction in~\eqref{conversionRCFtoRomik1(i)even} by $x=[d_0;c_1/d_1,c_2/d_2,\dots]$ (so $d_0=a_0$, $c_1=1=c_2=\cdots =c_{n}=c_i$, for all $i\geq n+\tfrac{a_n-2}{2}+1$, and all other $cj$ are $-1$).\medskip\

If $a_n$ is \emph{odd}, apply the ``strange insertion'' $\tfrac{a_n-1}{2}$-times ``before'' $a_n$. I.e., replace $x=[a_0;a_1,\dots,a_{n-1},a_n, a_{n+1},\dots]$ by
$$
\pi_{n+\tfrac{a_n-1}{2}}\left( \cdots (\pi_{n+2}(\pi_n([a_0;a_1,\dots,a_{n-1},a_n, a_{n+1},\dots]))) \cdots \right),
$$
which is equal to
\begin{equation}\label{conversionRCFtoRomik1(i)odd}
x=[a_0;1/a_1,\dots,1/a_{n-1},(1/2,1/0)^{\tfrac{a_n-2}{2}},1/1,1/a_{n+1},\dots].
\end{equation}
Again, denote the continued fraction in~\eqref{conversionRCFtoRomik1(i)odd} by $x=[d_0;c_1/d_1,c_2/d_2,\dots]$.

\item[ ]

\item[($ii$)] The case $a_n=1=a_{n+1}$. In this case, singularize $a_{n+1}=1$. I.e., replace $x=[a_0;a_1,\dots,a_{n-1},1,1,a_{n+2},a_{n+3},\dots]$ by
$$
\sigma_n([a_0;a_1,\dots,a_{n-1},1,1,a_{n+2},a_{n+3},\dots]),
$$
which equals
\begin{equation}\label{conversionRCFtoRomik1(ii)}
x=[a_0;1/a_1,\dots,1/a_{n-1},1/2,-1/(a_{n+2}+1),1/a_{n+3},\dots].
\end{equation}
Denote the continued fraction in~\eqref{conversionRCFtoRomik1(ii)} by $x=[d_0;c_1/d_1,c_2/d_2,\dots]$.

\item[ ]

\item[($iii$)] The case $a_n=1$, $a_{n+1}\geq 2$. In this case, start to apply insertions $a_n-1$-times ``after'' $a_n=1$ and ``before'' $a_{n+1}$. I.e., replace the regular continued fraction expansion $[a_0;a_1,\dots,a_{n-1},1, a_{n+1},a_{n+2},\dots]$ of $x$ by
$$
\iota_{n+a_{n+1}-1}(\cdots (\iota_{n+1}(\iota_n([a_0;a_1,\dots,a_{n-1},1,a_{n+1},a_{n+2},\dots])))\cdots ),
$$
which equals
\begin{equation}\label{conversionRCFtoRomik1(iii)}
x=[a_0;1/a_1,\dots,1/a_{n-1},1/2,(-1/2)^{a_{n+1}-2}, -1/1,1/a_{n+2},\dots].
\end{equation}
Denote the continued fraction in~\eqref{conversionRCFtoRomik1(ii)} by $x=[d_0;c_1/d_1,c_2/d_2,\dots]$.

\item[ ]    
\end{itemize}

\textbf{(II)} Let $n\geq 1$ the first index in the expansion of $x$ we found in step \textbf{(I)} of this algorithm, for which $d_n\neq 2$. If such an index $n$ does not exist, that is, if $d_n\neq 2$ for all $n$, then we are done and $[d_0;c_1/d_1,c_2/d_2,\dots]$ is the Romik-expansion of $x$. Otherwise, repeat step \textbf{(I)}.}\hfill $\triangledown$
\end{algorithm}

\subsection{A metric result}\label{MetricResult}
Note that in the conversion from the RCF-expansion of any $x\in\R$ to the Romik-expansion of $x$, some RCF-convergents of $x$ are ``kept'' as Romik-convergents, while other RCF-convergents of $x$ are skipped as Romik-convergents. From Algorithm~\ref{convertionRCFtoRomikExpansion}, it is clear that ``nothing happens'' whenever $R^n(x)\in [0,\tfrac{1}{3}]\cup [\tfrac{2}{3},1]$, that an RCF-convergent of $x$ is kept whenever $R^n(x)\in (\tfrac{1}{3},\tfrac{1}{2}]$, and that an RCF-convergent of $x$ is deleted  whenever $R^n(x)\in (\tfrac{1}{2},\tfrac{2}{3})$. In order to see -- for almost all $x\in [0,1]$ with respect to Lebesgue measure $\lambda$ -- what the fraction of RCF-convergents of $x$ which are also Romik-convergents among all RCF-convergents of $x$, we use \emph{Hopf's Ratio Ergodic Theorem}; see Theorem~11.5.2 on p.~219 of~\cite{[DK]}, or~\cite{[Z]}. Setting $f=1_{(\tfrac{1}{2},\tfrac{2}{3})}$ (i.e., $f$ is the indicator function of the set $(\tfrac{1}{2},\tfrac{2}{3})$), and $g=1_{(\tfrac{1}{3},\tfrac{2}{3})}$, then $g\geq 0$ and $\int_0^1 g\, {\rm d}\mu >0$, where $\mu$ is the $\sigma$-finite, infinite $R$-invariant measure with density given in~\eqref{density}. Due to Theorem~11.5.2 from~\cite{[DK]},
$$
\lim_{n\to\infty} \frac{\sum_{i=0}^{n-1}f(R^i(x))}{\sum_{i=0}^{n-1}g(R^i(x))} = \frac{\int_0^1 f\, {\rm d}\mu}{\int_0^1 g\, {\rm d}\mu},\quad \mu-\text{a.e.}
$$
As 
$$
\int_0^1 f\, {\rm d}\mu = \int_{\tfrac{1}{2}}^{\tfrac{2}{3}} \frac{1}{x(1-x)}\, {\rm d}x = \log 2,
$$
and 
$$
\int_0^1 g\, {\rm d}\mu = \int_{\tfrac{1}{3}}^{\tfrac{2}{3}} \frac{1}{x(1-x)}\, {\rm d}x = 2\log 2,
$$
we have obtained the following metrical result.

\begin{theorem}\label{thm:densityRCFconvergents}
For almost all $x\in [0,1]$, the asymptotic ratio of deleted RCF-convergents of $x$ among all RCF-convergents of $x$ is
$$
\frac{\displaystyle \int_{\tfrac{1}{2}}^{\tfrac{2}{3}} \frac{1}{x(1-x)}\, {\rm d}x}{\displaystyle \int_{\tfrac{1}{3}}^{\tfrac{2}{3}} \frac{1}{x(1-x)}\, {\rm d}x} = \frac{1}{2}.
$$
So for almost all $x$, asymptotically half of all RCF-convergents are skipped.
\end{theorem}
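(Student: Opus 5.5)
The plan is to turn the combinatorial description of the conversion (Algorithm~\ref{convertionRCFtoRomikExpansion} together with Proposition~\ref{prop:relationRomikGauss}) into a statement about visits of the $R$-orbit of $x$ to certain intervals. After that, Hopf's Ratio Ergodic Theorem finishes the argument, applied to the conservative ergodic system $([0,1],R,\mu)$ with $\mu$ as in~\eqref{density} and ergodicity taken from Romik~\cite{[R]}.

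\textbf{Step 1: bookkeeping of RCF-convergents along the orbit.} By Proposition~\ref{prop:relationRomikGauss}, each application of $R$ either decreases the first RCF digit by $2$ (when $R^j(x)\in[0,\tfrac13)$), or consumes exactly one RCF digit (when $R^j(x)\in(\tfrac13,\tfrac12)$, where $R=G$), or acts on $(\tfrac12,1)$ as the flipped Gauss map. In the last case there are two sub-cases.
\begin{itemize}
\item If $R^j(x)\in(\tfrac23,1)$ ($a_2\ge2$), an insertion occurs. No RCF digit is consumed; it is only transformed ($a_2\mapsto a_2-1$).
\item If $R^j(x)\in(\tfrac12,\tfrac23)$ ($a_2=1$), a singularization occurs. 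It removes exactly one RCF-convergent and consumes the two digits $1,1$.
\end{itemize}
I would make this precise by tracking the matrices $M_n$ of Lemma~\ref{lemma:matrices1} against the RCF matrices. The aim is to show the following. Every RCF-convergent of $x$ is ``processed'' exactly at a time $j$ with $R^j(x)\in(\tfrac13,\tfrac23)$. It is kept when $R^j(x)\in(\tfrac13,\tfrac12]$ and deleted when $R^j(x)\in(\tfrac12,\tfrac23)$. Visits to $[0,\tfrac13]\cup[\tfrac23,1]$ produce only mediants or repeated convergents (strange insertions, insertions), as observed before the theorem. Consequently, for the $N$-th time $j_N$,
$$
\#\{\text{RCF-convergents processed up to time } j_N\}=\sum_{i=0}^{j_N-1}g(R^ix)+O(1),\qquad \#\{\text{deleted}\}=\sum_{i=0}^{j_N-1}f(R^ix)+O(1),
$$
with $f=1_{(\frac12,\frac23)}$ and $g=1_{(\frac13,\frac23)}$.

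\textbf{Step 2: ratio limit.} Both $f$ and $g$ are in $L^1(\mu)$, and $\int g\,d\mu=2\log2>0$. By Hopf's Ratio Ergodic Theorem, $\sum f(R^ix)/\sum g(R^ix)\to\int f\,d\mu/\int g\,d\mu$ for $\mu$-a.e.\ $x$, and hence for $\lambda$-a.e.\ $x$, since the two measures are equivalent on $(0,1)$. Ergodicity and conservativity also give $\sum g(R^ix)\to\infty$ a.e. Therefore the $O(1)$ errors from Step 1 and the choice of subsequence $j_N$ do not affect the limit; the ratio is monotone in between processing times, so passing to all indices is harmless. The integrals are elementary, using the antiderivative $\log\frac{x}{1-x}$: one gets $\log2-0=\log 2$ and $\log2-(-\log2)=2\log2$, so the ratio is $\tfrac12$.

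\textbf{Main obstacle.} I expect the hard part to be Step 1: justifying rigorously that the counts are exactly the Birkhoff sums of $f$ and $g$ up to bounded error. The difficulty is that a singularization consumes \emph{two} RCF digits but deletes only \emph{one} convergent, and the following digit $a_3+1$ is re-used. I would handle this by identifying RCF-convergents with the times at which the orbit passes through the RCF-digit boundary, namely the times when $R^j(x)=G^k(x)$ or $R^j(x)=1-G^k(x)$ for some $k$. Then I would check, case by case via~\eqref{RelationFG}, that each such $k$ corresponds to exactly one visit to $(\tfrac13,\tfrac23)$. The alternative is to count the digits of $G$ consumed directly and relate $k$ to $j$.
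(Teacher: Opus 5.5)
\textbf{The gap is in Step~1, and it cannot be repaired to give the stated value.} Your argument is the paper's own: Hopf's ratio ergodic theorem for $(R,\mu)$ with the same $f$ and $g$. The paper also simply asserts your Step~1 correspondence. That correspondence is false, and the check you propose in your last paragraph fails exactly at the point you flagged.

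A visit to $(\tfrac12,\tfrac23)$ happens in a state $[0;1,1,b,\dots]$. The first $1$ is the remainder of an RCF digit $c_k$ (after strange insertions, possibly with the carried $+1$). The second $1$ is what remains of $c_{k+1}$ after the $c_{k+1}-1$ insertions made at the preceding visits to $(\tfrac23,1)$. The singularization deletes $P_k/Q_k$, but the Romik convergent created at that same step is $P_{k+1}/Q_{k+1}$. So the bookkeeping is:
\begin{itemize}
\item each visit to $(\tfrac12,\tfrac23)$ processes \emph{two} RCF-convergents, one deleted and one kept;
\item each visit to $(\tfrac13,\tfrac12)$ processes one, which is kept;
\item visits to $[0,\tfrac13)\cup(\tfrac23,1)$ process none.
\end{itemize}
Thus $\sum_{i<j}g(R^ix)$ counts the \emph{kept} RCF-convergents and $\sum_{i<j}f(R^ix)$ the deleted ones. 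The total is $\sum g+\sum f$, not $\sum g+O(1)$.

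Two concrete checks:
\begin{itemize}
\item For $x=[0;\overline{1}]$ the orbit alternates between $(\tfrac12,\tfrac23)$ and $(\tfrac13,\tfrac12)$. The Romik convergents $\tfrac12,\tfrac23,\tfrac58,\tfrac{8}{13},\tfrac{21}{34},\dots$ omit exactly every third RCF-convergent ($\tfrac11,\tfrac35,\tfrac{13}{21},\dots$), although $\sum f/\sum g\to\tfrac12$.
\item For the running example $x=[0;\overline{6,2,1,2,4,1,1}]$, one period of the $R$-orbit has five visits to $(\tfrac13,\tfrac23)$ (at $j=2,3,9$ and $j=5,8$) but covers seven RCF-convergents. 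The convergents $\tfrac{3}{19}$ and $\tfrac{35}{223}$ are deleted, while $\tfrac{8}{51}$ and $\tfrac{43}{274}$ are produced at $j=5$ and $j=8$.
\end{itemize}

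With the correct bookkeeping, your Step~2 goes through verbatim, including the $O(1)$ and interpolation remarks, but it gives
$$\frac{\#\{\text{deleted}\}}{\#\{\text{all}\}}\longrightarrow\frac{\int f\,d\mu}{\int f\,d\mu+\int g\,d\mu}=\frac{\log 2}{3\log 2}=\frac13\quad\text{for $\lambda$-a.e.\ }x.$$
The quantity $\int f\,d\mu/\int g\,d\mu=\tfrac12$ computed in the paper is the asymptotic ratio of deleted to \emph{kept} RCF-convergents. So for almost every $x$, one third of the RCF-convergents are skipped and two thirds are Romik-convergents. The theorem, and the matching sentence in the abstract, should be corrected accordingly rather than proved as stated.
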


\end{document}